\begin{document}
%

\newtheorem{theorem}{Theorem}
\newtheorem{lemma}[theorem]{Lemma}
\newtheorem{example}[theorem]{Example}
\newtheorem{algol}{Algorithm}
\newtheorem{corollary}[theorem]{Corollary}
\newtheorem{prop}[theorem]{Proposition}
\newtheorem{proposition}[theorem]{Proposition}
\newtheorem{problem}[theorem]{Problem}
\newtheorem{conj}[theorem]{Conjecture}
\newtheorem{definition}[theorem]{Definition}
\newtheorem{question}[theorem]{Question}
\newtheorem{remark}[theorem]{Remark}
\newtheorem*{acknowledgement}{Acknowledgements}

\newtheorem*{Thm*}{Theorem}
\newtheorem{Thm}{Theorem}[section]
\renewcommand*{\theThm}{\Alph{Thm}}

\numberwithin{equation}{section}
\numberwithin{theorem}{section}
\numberwithin{table}{section}
\numberwithin{figure}{section}

\allowdisplaybreaks

\definecolor{olive}{rgb}{0.3, 0.4, .1}
\definecolor{dgreen}{rgb}{0.,0.5,0.}

\def\cc#1{\textcolor{red}{#1}}

\definecolor{dgreen}{rgb}{0.,0.6,0.}
\def\tgreen#1{\begin{color}{dgreen}{\it{#1}}\end{color}}
\def\tblue#1{\begin{color}{blue}{\it{#1}}\end{color}}
\def\tred#1{\begin{color}{red}#1\end{color}}
\def\tmagenta#1{\begin{color}{magenta}{\it{#1}}\end{color}}
\def\tNavyBlue#1{\begin{color}{NavyBlue}{\it{#1}}\end{color}}
\def\tMaroon#1{\begin{color}{Maroon}{\it{#1}}\end{color}}

%


 \def\mand{\qquad\mbox{and}\qquad}

\def\cA{{\mathcal A}}
\def\cB{{\mathcal B}}
\def\cC{{\mathcal C}}
\def\cD{{\mathcal D}}
\def\cI{{\mathcal I}}
\def\cJ{{\mathcal J}}
\def\cK{{\mathcal K}}
\def\cL{{\mathcal L}}
\def\cM{{\mathcal M}}
\def\cN{{\mathcal N}}
\def\cO{{\mathcal O}}
\def\cP{{\mathcal P}}
\def\cQ{{\mathcal Q}}
\def\cR{{\mathcal R}}
\def\cS{{\mathcal S}}
\def\cT{{\mathcal T}}
\def\cU{{\mathcal U}}
\def\cV{{\mathcal V}}
\def\cW{{\mathcal W}}
\def\cX{{\mathcal X}}
\def\cY{{\mathcal Y}}
\def\cZ{{\mathcal Z}}

\def\C{\mathbb{C}}
\def\F{\mathbb{F}}
\def\K{\mathbb{K}}
\def\Z{\mathbb{Z}}
\def\R{\mathbb{R}}
\def\Q{\mathbb{Q}}
\def\N{\mathbb{N}}
\def\M{\mathrm{M}}
\def\L{\mathbb{L}}
\def\M{{\normalfont\textsf{M}}} 
\def\U{\mathbb{U}}
\def\P{\mathbb{P}}
\def\A{\mathbb{A}}
\def\fp{\mathfrak{p}}
\def\fq{\mathfrak{q}}
\def\n{\mathfrak{n}}
\def\X{\mathcal{X}}
\def\x{\textrm{\bf x}}
\def\w{\textrm{\bf w}}
\def\ovQ{\overline{\Q}}
\def \Kab{\K^{\mathrm{ab}}}
\def \Qab{\Q^{\mathrm{ab}}}
\def \Qtr{\Q^{\mathrm{tr}}}
\def \Kc{\K^{\mathrm{c}}}
\def \Qc{\Q^{\mathrm{c}}}
\def\ZK{\Z_\K}
\def\ZKS{\Z_{\K,\cS}}
\def\ZKSf{\Z_{\K,\cS_{f}}}
\def\RSf{R_{\cS_{f}}}
\def\RTf{R_{\cT_{f}}}

\def\S{\Gcal}
\def\vec#1{\mathbf{#1}}
\def\ov#1{{\overline{#1}}}
\def\sign{{\operatorname{sign}}}
\def\Gm{\G_{\textup{m}}}
\def\fA{{\mathfrak A}}
\def\fB{{\mathfrak B}}

\def \GL{\mathrm{GL}}
\def \Mat{\mathrm{Mat}}

\def\house#1{{%
    \setbox0=\hbox{$#1$}
    \vrule height \dimexpr\ht0+1.4pt width .5pt depth \dp0\relax
    \vrule height \dimexpr\ht0+1.4pt width \dimexpr\wd0+2pt depth \dimexpr-\ht0-1pt\relax
    \llap{$#1$\kern1pt}
    \vrule height \dimexpr\ht0+1.4pt width .5pt depth \dp0\relax}}


\newenvironment{notation}[0]{%
  \begin{list}%
    {}%
    {\setlength{\itemindent}{0pt}
     \setlength{\labelwidth}{1\parindent}
     \setlength{\labelsep}{\parindent}
     \setlength{\leftmargin}{2\parindent}
     \setlength{\itemsep}{0pt}
     }%
   }%
  {\end{list}}

\newenvironment{parts}[0]{%
  \begin{list}{}%
    {\setlength{\itemindent}{0pt}
     \setlength{\labelwidth}{1.5\parindent}
     \setlength{\labelsep}{.5\parindent}
     \setlength{\leftmargin}{2\parindent}
     \setlength{\itemsep}{0pt}
     }%
   }%
  {\end{list}}
\newcommand{\Part}[1]{\item[\upshape#1]}

\def\Case#1#2{%
\smallskip\paragraph{\textbf{\boldmath Case #1: #2.}}\hfil\break\ignorespaces}

\def\Subcase#1#2{%
\smallskip\paragraph{\textit{\boldmath Subcase #1: #2.}}\hfil\break\ignorespaces}

\renewcommand{\a}{\alpha}
\renewcommand{\b}{\beta}
\newcommand{\g}{\gamma}
\renewcommand{\d}{\delta}
\newcommand{\e}{\epsilon}
\newcommand{\f}{\varphi}
\newcommand{\fhat}{\hat\varphi}
\newcommand{\bfphi}{{\boldsymbol{\f}}}
\renewcommand{\l}{\lambda}
\renewcommand{\k}{\kappa}
\newcommand{\lhat}{\hat\lambda}
\newcommand{\bfmu}{{\boldsymbol{\mu}}}
\renewcommand{\o}{\omega}
\renewcommand{\r}{\rho}
\newcommand{\rbar}{{\bar\rho}}
\newcommand{\s}{\sigma}
\newcommand{\sbar}{{\bar\sigma}}
\renewcommand{\t}{\tau}
\newcommand{\z}{\zeta}


\newcommand{\ga}{{\mathfrak{a}}}
\newcommand{\gb}{{\mathfrak{b}}}
\newcommand{\gn}{{\mathfrak{n}}}
\newcommand{\gp}{{\mathfrak{p}}}
\newcommand{\gP}{{\mathfrak{P}}}
\newcommand{\gq}{{\mathfrak{q}}}
\newcommand{\h}{{\mathfrak{h}}}
\newcommand{\Abar}{{\bar A}}
\newcommand{\Ebar}{{\bar E}}
\newcommand{\kbar}{{\bar k}}
\newcommand{\Kbar}{{\bar K}}
\newcommand{\Pbar}{{\bar P}}
\newcommand{\Sbar}{{\bar S}}
\newcommand{\Tbar}{{\bar T}}
\newcommand{\gbar}{{\bar\gamma}}
\newcommand{\lbar}{{\bar\lambda}}
\newcommand{\ybar}{{\bar y}}
\newcommand{\phibar}{{\bar\f}}

\newcommand{\Acal}{{\mathcal A}}
\newcommand{\Bcal}{{\mathcal B}}
\newcommand{\Ccal}{{\mathcal C}}
\newcommand{\Dcal}{{\mathcal D}}
\newcommand{\Ecal}{{\mathcal E}}
\newcommand{\Fcal}{{\mathcal F}}
\newcommand{\Gcal}{{\mathcal G}}
\newcommand{\Hcal}{{\mathcal H}}
\newcommand{\Ical}{{\mathcal I}}
\newcommand{\Jcal}{{\mathcal J}}
\newcommand{\Kcal}{{\mathcal K}}
\newcommand{\Lcal}{{\mathcal L}}
\newcommand{\Mcal}{{\mathcal M}}
\newcommand{\Ncal}{{\mathcal N}}
\newcommand{\Ocal}{{\mathcal O}}
\newcommand{\Pcal}{{\mathcal P}}
\newcommand{\Qcal}{{\mathcal Q}}
\newcommand{\Rcal}{{\mathcal R}}
\newcommand{\Scal}{{\mathcal S}}
\newcommand{\Tcal}{{\mathcal T}}
\newcommand{\Ucal}{{\mathcal U}}
\newcommand{\Vcal}{{\mathcal V}}
\newcommand{\Wcal}{{\mathcal W}}
\newcommand{\Xcal}{{\mathcal X}}
\newcommand{\Ycal}{{\mathcal Y}}
\newcommand{\Zcal}{{\mathcal Z}}

\renewcommand{\AA}{\mathbb{A}}
\newcommand{\BB}{\mathbb{B}}
\newcommand{\CC}{\mathbb{C}}
\newcommand{\FF}{\mathbb{F}}
\newcommand{\G}{\mathbb{G}}
\newcommand{\KK}{\mathbb{K}}
\newcommand{\NN}{\mathbb{N}}
\newcommand{\PP}{\mathbb{P}}
\newcommand{\QQ}{\mathbb{Q}}
\newcommand{\RR}{\mathbb{R}}
\newcommand{\ZZ}{\mathbb{Z}}

\newcommand{\bfa}{{\boldsymbol a}}
\newcommand{\bfb}{{\boldsymbol b}}
\newcommand{\bfc}{{\boldsymbol c}}
\newcommand{\bfd}{{\boldsymbol d}}
\newcommand{\bfe}{{\boldsymbol e}}
\newcommand{\bff}{{\boldsymbol f}}
\newcommand{\bfg}{{\boldsymbol g}}
\newcommand{\bfi}{{\boldsymbol i}}
\newcommand{\bfj}{{\boldsymbol j}}
\newcommand{\bfk}{{\boldsymbol k}}
\newcommand{\bfm}{{\boldsymbol m}}
\newcommand{\bfp}{{\boldsymbol p}}
\newcommand{\bfr}{{\boldsymbol r}}
\newcommand{\bfs}{{\boldsymbol s}}
\newcommand{\bft}{{\boldsymbol t}}
\newcommand{\bfu}{{\boldsymbol u}}
\newcommand{\bfv}{{\boldsymbol v}}
\newcommand{\bfw}{{\boldsymbol w}}
\newcommand{\bfx}{{\boldsymbol x}}
\newcommand{\bfy}{{\boldsymbol y}}
\newcommand{\bfz}{{\boldsymbol z}}
\newcommand{\bfA}{{\boldsymbol A}}
\newcommand{\bfF}{{\boldsymbol F}}
\newcommand{\bfB}{{\boldsymbol B}}
\newcommand{\bfD}{{\boldsymbol D}}
\newcommand{\bfG}{{\boldsymbol G}}
\newcommand{\bfI}{{\boldsymbol I}}
\newcommand{\bfM}{{\boldsymbol M}}
\newcommand{\bfP}{{\boldsymbol P}}
\newcommand{\bfX}{{\boldsymbol X}}
\newcommand{\bfY}{{\boldsymbol Y}}
\newcommand{\bfzero}{{\boldsymbol{0}}}
\newcommand{\bfone}{{\boldsymbol{1}}}

\newcommand{\aff}{{\textup{aff}}}
\newcommand{\Aut}{\operatorname{Aut}}
\newcommand{\Berk}{{\textup{Berk}}}
\newcommand{\Birat}{\operatorname{Birat}}
\newcommand{\characteristic}{\operatorname{char}}
\newcommand{\codim}{\operatorname{codim}}
\newcommand{\Crit}{\operatorname{Crit}}
\newcommand{\critwt}{\operatorname{critwt}} 
\newcommand{\cond}{\operatorname{cond}}
\newcommand{\Cycle}{\operatorname{Cycles}}
\newcommand{\diag}{\operatorname{diag}}
\newcommand{\Disc}{\operatorname{Disc}}
\newcommand{\Div}{\operatorname{Div}}
\newcommand{\Dom}{\operatorname{Dom}}
\newcommand{\End}{\operatorname{End}}
\newcommand{\ExtOrbit}{\mathcal{EO}} 
\newcommand{\Fbar}{{\bar{F}}}
\newcommand{\Fix}{\operatorname{Fix}}
\newcommand{\FOD}{\operatorname{FOD}}
\newcommand{\FOM}{\operatorname{FOM}}
\newcommand{\Gal}{\operatorname{Gal}}
\newcommand{\genus}{\operatorname{genus}}
\newcommand{\GITQuot}{/\!/}
\newcommand{\GR}{\operatorname{\mathcal{G\!R}}}
\newcommand{\Hom}{\operatorname{Hom}}
\newcommand{\Index}{\operatorname{Index}}
\newcommand{\Image}{\operatorname{Image}}
\newcommand{\Isom}{\operatorname{Isom}}
\newcommand{\hhat}{{\hat h}}
\newcommand{\Ker}{{\operatorname{ker}}}
\newcommand{\Ksep}{K^{\textup{sep}}}  
\newcommand{\lcm}{{\operatorname{lcm}}}
\newcommand{\LCM}{{\operatorname{LCM}}}
\newcommand{\Lift}{\operatorname{Lift}}
\newcommand{\limstar}{\lim\nolimits^*}
\newcommand{\limstarn}{\lim_{\hidewidth n\to\infty\hidewidth}{\!}^*{\,}}
\newcommand{\llog}{\log\log}
\newcommand{\logplus}{\log^{\scriptscriptstyle+}}
\newcommand{\maxplus}{\operatornamewithlimits{\textup{max}^{\scriptscriptstyle+}}}
\newcommand{\MOD}[1]{~(\textup{mod}~#1)}
\newcommand{\Mor}{\operatorname{Mor}}
\newcommand{\Moduli}{\mathcal{M}}
\newcommand{\Norm}{{\operatorname{\mathsf{N}}}}
\newcommand{\notdivide}{\nmid}
\newcommand{\normalsubgroup}{\triangleleft}
\newcommand{\NS}{\operatorname{NS}}
\newcommand{\onto}{\twoheadrightarrow}
\newcommand{\ord}{\operatorname{ord}}
\newcommand{\Orbit}{\mathcal{O}}
\newcommand{\Per}{\operatorname{Per}}
\newcommand{\Perp}{\operatorname{Perp}}
\newcommand{\PrePer}{\operatorname{PrePer}}
\newcommand{\PGL}{\operatorname{PGL}}
\newcommand{\Pic}{\operatorname{Pic}}
\newcommand{\Prob}{\operatorname{Prob}}
\newcommand{\Proj}{\operatorname{Proj}}
\newcommand{\Qbar}{{\bar{\QQ}}}
\newcommand{\rank}{\operatorname{rank}}
\newcommand{\Rat}{\operatorname{Rat}}
\newcommand{\Res}{{\operatorname{Res}}}
\newcommand{\Resultant}{\operatorname{Res}}
\renewcommand{\setminus}{\smallsetminus}
\newcommand{\sgn}{\operatorname{sgn}}
\newcommand{\SL}{\operatorname{SL}}
\newcommand{\Span}{\operatorname{Span}}
\newcommand{\Spec}{\operatorname{Spec}}
\renewcommand{\ss}{{\textup{ss}}}
\newcommand{\stab}{{\textup{stab}}}
\newcommand{\Stab}{\operatorname{Stab}}
\newcommand{\Support}{\operatorname{Supp}}
\newcommand{\Sym}{\operatorname{Sym}}  
\newcommand{\tors}{{\textup{tors}}}
\newcommand{\Trace}{\operatorname{Trace}}
\newcommand{\trianglebin}{\mathbin{\triangle}} 
\newcommand{\tr}{{\textup{tr}}} 
\newcommand{\UHP}{{\mathfrak{h}}}    
\newcommand{\Wander}{\operatorname{Wander}}
\newcommand{\<}{\langle}
\renewcommand{\>}{\rangle}

\newcommand{\pmodintext}[1]{~\textup{(mod}~#1\textup{)}}
\newcommand{\ds}{\displaystyle}
\newcommand{\longhookrightarrow}{\lhook\joinrel\longrightarrow}
\newcommand{\longonto}{\relbar\joinrel\twoheadrightarrow}
\newcommand{\SmallMatrix}[1]{%
  \left(\begin{smallmatrix} #1 \end{smallmatrix}\right)}

  \def\({\left(}
\def\){\right)}


\title
{Determining Sidon Polynomials on Sidon Sets over $\F_q\times \F_q$}

\author {Muhammad Afifurrahman}
\author {Aleams Barra}
\address[Muhammad Afifurrahman]{Institut Teknologi Bandung, Jalan Ganesha 10, 40132, Bandung, Indonesia}
\email{afifumuh@gmail.com}
\address[Aleams Barra]{Institut Teknologi Bandung, Jalan Ganesha 10, 40132, Bandung, Indonesia}
\email{aleamsbarra@itb.ac.id}
\subjclass[2020]{11T06 (primary), 11B83, 05A20 (secondary)}

\keywords{Sidon sets, planar polynomial, finite fields}
\thanks{}

\begin{abstract} Let $p$ be a prime, and $q=p^n$ be a prime power. In his works on Sidon sets over $\F_q\times \F_q$, Cilleruelo conjectured about polynomials that could generate $q$-element Sidon sets over $\F_q\times \F_q$.

Here, we derive some criteria for determining polynomials that could generate $q$-element Sidon set over $\F_q\times \F_q$. Using these criteria, we prove that certain classes of monomials and cubic polynomials over $\F_p$ cannot be used to generate $p$-element Sidon set over $\F_p\times \F_p$. We also discover a connection between the needed polynomials and planar polynomials.
\end{abstract}

\maketitle

\tableofcontents
\section{Introduction} Sometimes we use some objects to satisfy some other means, but we do not know whether any other similar object exists.
\subsection{Background}
Let $G$ be an abelian group, written additively. A subset $\Acal \subseteq G$ is a \textit{Sidon set} if, for any $a_1,a_2,a_3,a_4\in \Acal$ that satisfy $a_1-a_2=a_3-a_4$, $\{a_1,a_4\}=\{a_2,a_3\}$.

Sidon sets have been studied extensively since the 1940s, and have appeared on subjects such as finite geometry, graph theory, and coding theory, to mention some instances.

In this paper, we focus on Sidon sets over the group $(\F_q\times \F_q,+)$, with $\F_q$ being a finite field with $q$ being a power of a prime $p$. A precursor of the set first appeared in \cite{erdos} (by considering that the map $\N\times \N \to \Z$ given as $(k,k^2)\to (2pk+k^2)$ with $1\leq k\leq p$ and $\N\times \N \to \F_p\times \F_p$ given as $(k,k^2)\to (k,k^2)$ both gives a Sidon set over the respective groups). The object is resurgent in 2010s, with applications in additive number theory \cite{cil, cilsa} and extremal graph theory \cite{allen, CMT, solymosi, timmons}.

We consider the Sidon sets as being parametrized by two polynomials in $\F_q[x]$. We aim to give some criteria of polynomials that can parametrize a Sidon set, and derive some results regarding structures of a Sidon set over $\F_q\times \F_q$.




\subsection{Maximum Sidon sets}
We now consider a Sidon set $\Acal$ over $(\F_q\times~\F_q,~+)$. We first notice that  when $\F_q$ is of characteristic 2, there do not exist non-trivial Sidon sets over $(\F_q\times \F_q, +)$, since $(a,b)-(c,d)=(c,d)-(a,b)$ for all $(a,b),(c,d)\in\F_{q}\times \F_{q}$. We may now assume that $\characteristic(\F_q)>2$ in this paper.

By a counting argument based on the set $\{a-a', a,a\in \Acal\}$, we get that  $|\Acal|\leq q$. If equality occurs, we say that $\Acal$ is a \textit{maximum} Sidon set over $\F_q\times \F_q$.

Now, let $\F_q=\{x_1,x_2,\dotsc,x_q\}$ be an indexing of $\F_q$. By Lagrange interpolation Theorem, for any $q$-element multiset
\begin{align*}
 X=\{\!\{(a_1,b_1),(a_2,b_2),\dotsc,(a_q,b_q)\}\!\}\subseteq \F_q\times \F_q,
\end{align*} one can always find  $P,Q\in \F_q[x]$ with $\deg P,\deg Q\leq q-1$ such that $(P(x_i),Q(x_i))=(a_i,b_i)$ for $1\leq i \leq q$.  We note that this representation of $X$ is not unique.

By this observation, we see that any maximum Sidon sets over $\F_q\times \F_q$ can be written in the form $$(P,Q):=\{\left(P(x),Q(x)\right)\colon x\in \F_q\}$$ where $P,Q\in \F_q[x]$. Furthermore, we may assume that all polynomials are taken modulo $x^q-x$. Hence, we assume $\deg P, \deg Q \leq q-1$ from now on.

We now restate a family of maximum Sidon sets constructed by Cilleruelo. \begin{lemma}\cite{cil}
\label{lem:OneorTwo}
Let $P,Q\in \F_q[x]$ be non-constant polynomials with degree not more than two, such that for any $k\in \F_q$, $P-kQ$ is not constant. Then, $(P,Q)$ is a maximum Sidon set over $\F_q\times \F_q$. In particular, $(x,x^2)$ is a maximum Sidon set.
\end{lemma}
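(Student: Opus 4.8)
The plan is to translate the hypothesis into a single non-degeneracy condition and then read off the Sidon property from a one-line elimination. Write $P = p_2x^2 + p_1x + p_0$ and $Q = q_2x^2 + q_1x + q_0$, where $p_2$ or $q_2$ may vanish since only $\deg P,\deg Q\le 2$ is assumed. The first step is to record the equivalent form of the hypothesis: if $P-kQ$ is non-constant for every $k\in\F_q$, then $D:=p_2q_1-p_1q_2\neq 0$. Indeed, if $D=0$ then the rows $(p_2,p_1)$ and $(q_2,q_1)$ are linearly dependent over $\F_q$; since $Q$ is non-constant, $(q_2,q_1)\neq(0,0)$, so $(p_2,p_1)=k(q_2,q_1)$ for some $k\in\F_q$, and then $P-kQ=p_0-kq_0$ is constant, a contradiction. (In fact $D\neq 0$ is equivalent to the hypothesis once $P,Q$ are assumed non-constant, but only this implication is needed.)

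Next I would show that $x\mapsto(P(x),Q(x))$ is injective, so that $(P,Q)$ has exactly $q$ elements. Using the factorisations $P(a)-P(b)=(a-b)\bigl(p_2(a+b)+p_1\bigr)$ and likewise for $Q$, an equality $(P(a),Q(a))=(P(b),Q(b))$ with $a\neq b$ forces $p_2(a+b)+p_1=0$ and $q_2(a+b)+q_1=0$; substituting $p_1=-p_2(a+b)$ and $q_1=-q_2(a+b)$ into $D$ gives $D=0$, contradicting the first step.

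For the Sidon property, suppose $(P(a),Q(a))-(P(b),Q(b))=(P(c),Q(c))-(P(d),Q(d))$; by injectivity it suffices to prove $\{a,d\}=\{b,c\}$. Write the two coordinate equations using the factorisations above, multiply the $P$-equation by $q_2$, the $Q$-equation by $p_2$, and subtract: the quadratic terms cancel and one is left with $\bigl((a-b)-(c-d)\bigr)D=0$, so $a-b=c-d$ because $D\neq0$. If $a=b$ then $c=d$ and $\{a,d\}=\{b,c\}$ holds trivially. Otherwise $a-b=c-d\neq 0$, and cancelling this factor from the (rewritten) $P$- and $Q$-equations yields $p_2\bigl((a+b)-(c+d)\bigr)=0$ and $q_2\bigl((a+b)-(c+d)\bigr)=0$; if $a+b\neq c+d$ this would give $p_2=q_2=0$ and hence $D=0$, so in fact $a+b=c+d$. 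Combining $a-b=c-d$ with $a+b=c+d$ and using $\characteristic(\F_q)>2$ gives $a=c$ and $b=d$, so $\{a,d\}=\{b,c\}$. Finally the "in particular" statement is immediate: for $P=x$, $Q=x^2$ one has $D=-1\neq 0$, so $(x,x^2)$ satisfies the hypothesis.

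I expect the only real work — more bookkeeping than genuine obstacle — to be the first step: recognising that the hypothesis is precisely the non-vanishing of the cross-coefficient $D$. Once that is in place everything is forced, the hypothesis being used exactly twice (in the elimination and to rule out $a+b\neq c+d$), and characteristic $\neq2$ entering only at the very last line. (Alternatively one could argue that the hypotheses make $(P,Q)$ an affine image of $(x,x^2)$ and invoke invariance of the Sidon and maximality properties under affine maps of $\F_q\times\F_q$, but the direct computation above seems shortest.)
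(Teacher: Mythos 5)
Your proof is correct and complete. The paper itself gives no proof of this lemma --- it is stated with a citation to Cilleruelo \cite{cil} --- so there is no in-text argument to compare against; your reduction of the hypothesis to the single condition $D=p_2q_1-p_1q_2\neq 0$, followed by the elimination yielding $a-b=c-d$ and then $a+b=c+d$, is a valid self-contained verification, with $D\neq 0$ and $\characteristic(\F_q)>2$ used exactly where you say they are.
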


 It is conjectured in \cite{cand} that when $q=p$, these families are the only possible maximum Sidon sets over $\F_p\times \F_p$. The precise statement is as follows.
\begin{conj}\cite{cand} \label{con:Cand}
Let $p$ be prime, and $A$ be a maximum Sidon set over $\F_p\times \F_p$. Then, there exists $P,Q\in \F_p[x]$ with $1\leq \deg(P),\deg(Q)\leq 2$ and $A=(P,\:Q)$.
\end{conj}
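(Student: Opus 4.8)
The plan is to reduce Conjecture~\ref{con:Cand} to the classification of planar polynomials over $\F_p$, which is known. Start from a maximum Sidon set $A$; by the discussion above $A=(P,Q)$ for some $P,Q\in\F_p[x]$ of degree $\le p-1$, and since $|A|=p$ the map $x\mapsto(P(x),Q(x))$ is a bijection $\F_p\to A$. Unwinding the Sidon condition through this bijection, $(P,Q)$ is a maximum Sidon set exactly when: for all $x_1,x_2,x_3,x_4\in\F_p$ with $x_1-x_2=x_3-x_4$, $P(x_1)-P(x_2)=P(x_3)-P(x_4)$, and $Q(x_1)-Q(x_2)=Q(x_3)-Q(x_4)$, one has $\{x_1,x_4\}=\{x_2,x_3\}$. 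Note also that for any invertible affine map $T\colon\F_p^2\to\F_p^2$, $T(A)$ is again a maximum Sidon set, and it has the conclusion of the conjecture iff $A$ does; so I am free to normalise $A$ by affine transformations.

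First I would reduce to the case where $A$ is a \emph{graph}, i.e.\ $A=\{(x,f(x))\colon x\in\F_p\}$ for some $f\in\F_p[x]$ (equivalently, no two points of $A$ share a first coordinate). Granting this, the condition above simplifies: writing $h_d(x)=f(x+d)-f(x)$, it becomes ``$h_d$ is injective for every $d\neq0$'', i.e.\ $h_d$ is a permutation of $\F_p$ for every $d\neq0$ --- which is precisely the definition of $f$ being a \emph{planar} polynomial. At this point I would invoke the theorem of Gluck, Hiramine, and R\'onyai--Sz\H{o}nyi: every planar polynomial over the prime field $\F_p$ ($p$ odd) has the form $f=ax^2+bx+c$ with $a\neq0$. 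Then $A=(P,Q)$ with $P=x$ and $Q=f$, so $\deg P=1$, $\deg Q=2$, and $P-kQ=-ka\,x^2+(1-kb)x-kc$ is non-constant for every $k$ (degree $2$ if $k\neq0$, degree $1$ if $k=0$); hence $A$ has exactly the form asserted in the conjecture --- indeed it is one of the sets of Lemma~\ref{lem:OneorTwo}. When $A$ is only affinely equivalent to a graph, running the same argument in the new coordinates gives the conclusion, since the conclusion is affine-invariant.

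So the whole problem is concentrated in the reduction step: \textbf{every maximum Sidon set $A\subseteq\F_p\times\F_p$ is, after an invertible affine transformation, a graph over the first coordinate.} Equivalently, among the $p+1$ parallel classes of lines of $\mathrm{AG}(2,p)$, at least one is a transversal of $A$ (every line in it meets $A$ in exactly one point). Since $A$ is Sidon of size $p$, the $p^2-p$ nonzero differences of pairs of points of $A$ are pairwise distinct, so the symmetric set $D^{c}$ of nonzero vectors that are \emph{not} such differences has size exactly $(p^2-1)-(p^2-p)=p-1$; a transversal direction exists iff $D^{c}$ is a punctured line through the origin. In Fourier terms, $|\widehat{\mathbf 1_A}(\chi)|^2=p-1-\widehat{\mathbf 1_{D^c}}(\chi)$ for every non-trivial character $\chi$, and $D^{c}$ lies in a line iff $\widehat{\mathbf 1_A}$ vanishes at some non-trivial $\chi$. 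The hard part will be that the obvious counting is not enough to force this. If $m_L=|A\cap L|$ for the lines $L$ in a given direction, then $\sum_L m_L=p$, and --- since the points of $A$ on a single line form a Sidon set in $\F_p$ --- $\sum_L m_L(m_L-1)\le p-1$, so every direction satisfies $p\le\sum_L m_L^2\le 2p-1$; summing over all $p+1$ directions, the total slack is exactly $p-1$, i.e.\ just enough to \emph{permit}, but not to \emph{force}, one transversal direction. Thus one must exploit the Sidon property more forcefully than through difference-distinctness alone --- for instance a polynomial-method or Weil-type argument showing that a parametrisation $(P,Q)$ of large degree must produce a non-trivial solution of the system above, or a sharper character-sum/incidence argument pinning down $D^{c}$ exactly. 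I expect this reduction to be the genuine difficulty (and presumably why the statement is still only a conjecture); once it is in hand, the planar-polynomial classification finishes the proof at once.

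As a caveat about a tempting shortcut: one could try to bound $\deg P,\deg Q$ directly by applying B\'ezout or Lang--Weil to the variety $\{\,x_1-x_2=x_3-x_4,\ P(x_1)-P(x_2)=P(x_3)-P(x_4),\ Q(x_1)-Q(x_2)=Q(x_3)-Q(x_4)\,\}$ and arguing that it must have a non-trivial $\F_p$-point when the degrees are large. The difficulty is that the error terms in such estimates are of the wrong order of magnitude when the degrees are allowed to be as large as $p-1$, so a soft argument of this type will not cut the degree down to exactly $2$; some input genuinely special to the prime field (as in the planar classification) seems unavoidable.
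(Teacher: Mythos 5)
This statement is labelled as a conjecture in the paper, and the paper offers no proof of it (it explicitly notes that the only known progress is the special case of sets of the form $(x,Q)$, via Proposition~\ref{thm:planarSidon}, and that small primes $p\leq 5$ can be checked by brute force). Your proposal is therefore not being measured against an existing argument, and indeed it is not a proof: the entire open content of the conjecture is concentrated in your boxed reduction step, that every maximum Sidon set in $\F_p\times\F_p$ becomes a graph over the first coordinate after an invertible affine transformation. You state this step, observe correctly that the obvious direction-counting gives total slack exactly $p-1$ and hence cannot force a transversal direction, and then leave it unproved. That is a genuine gap, and it is the whole problem; everything after it is the already-known easy half.

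That said, the parts you do carry out are sound and match the paper's own observations. Granting the graph reduction, the Sidon condition for $\{(x,f(x))\}$ is exactly the planarity of $f$ (this is the content of Proposition~\ref{thm:planarSidon} and its converse), and the Gluck--Hiramine--R\'onyai--Sz\H{o}nyi classification of planar polynomials over prime fields then forces $f$ to be quadratic, recovering Lemma~\ref{thm:Cand}; your check that the resulting pair $(x,f)$ satisfies the hypotheses of Lemma~\ref{lem:OneorTwo} and that the conclusion is affine-invariant is correct. Your difference-set and slack computations are also correct as far as they go. But since the reduction to the graph case is precisely what is not known, the proposal should be read as a (reasonable) reduction of the conjecture to an equivalent open statement, not as a proof.
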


The only progress known to the authors regarding this conjecture is over subsets of the form $(x,\:Q)$ over $\F_p\times \F_p$, which is stated without proof (albeit with typographical errors) in \cite{cand}.

\begin{lemma}\cite{cand}\label{thm:Cand}
If $(x,\:Q)$ is a maximum Sidon set over $\F_p\times \F_p$, then $Q$ is quadratic.
\end{lemma}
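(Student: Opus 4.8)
The plan is to first recast the Sidon property of a set of the special shape $(x,Q)$ as a statement purely about the polynomial $Q$, and then to quote a classical classification. Since the first coordinate is the identity, the $p$ points $(a,Q(a))$, $a\in\F_p$, are automatically pairwise distinct, so $(x,Q)$ always has $p$ elements and ``maximum Sidon set'' is synonymous with ``Sidon set'' here. Unwinding the definition: suppose $a_1-a_2=a_3-a_4=:t$ and $Q(a_1)-Q(a_2)=Q(a_3)-Q(a_4)$. If $t=0$ the required conclusion $\{a_1,a_4\}=\{a_2,a_3\}$ is automatic; if $t\neq0$, substituting $a_2=a_1-t$ and $a_4=a_3-t$ one checks that $\{a_1,a_4\}=\{a_2,a_3\}$ is equivalent to $a_1=a_3$. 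Hence $(x,Q)$ is a maximum Sidon set over $\F_p\times\F_p$ if and only if, for every $t\in\F_p^{*}$, the map $x\mapsto Q(x)-Q(x-t)$ is injective on $\F_p$; replacing $t$ by $-t$, this says exactly that $x\mapsto Q(x+t)-Q(x)$ is a permutation of $\F_p$ for every $t\neq0$, i.e.\ that $Q$ is a \emph{planar polynomial} over $\F_p$. I would isolate this equivalence as a separate proposition, since it is the promised bridge to planar polynomials.

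Granting the equivalence, small degrees are eliminated directly: if $Q$ is constant then $Q(x+t)-Q(x)\equiv0$ is not a permutation, and if $\deg Q=1$ then $Q(x+t)-Q(x)$ is a nonzero constant, again not a permutation, so $\deg Q\ge2$. Writing $d=\deg Q$ and reducing mod $x^p-x$ so that $1\le d\le p-1$, the leading term of $Q(x+t)-Q(x)$ is a nonzero multiple of $x^{d-1}$, so this difference polynomial has degree exactly $d-1<p-1$. To force $d=2$ I would then invoke the classification of planar functions over prime fields (Gluck; Hiramine; R\'onyai--Sz\H{o}nyi), which states precisely that a planar polynomial over $\F_p$ is quadratic; conversely every $ax^2+bx+c$ with $a\neq0$ is planar, which recovers the $P=x$ instance of Lemma~\ref{lem:OneorTwo}. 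This gives $\deg Q=2$, the desired conclusion.

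If a self-contained argument is preferred to a citation, the route would be Hermite's criterion applied to each permutation polynomial $Q(x+t)-Q(x)$ of degree $d-1$: for every exponent $k$ with $1\le k\le p-2$ and $p\nmid k$, the reduction of $(Q(x+t)-Q(x))^{k}$ modulo $x^p-x$ must have degree at most $p-2$, and choosing $k$ with $k(d-1)\ge p-1$ yields coefficient identities that, pushed far enough, contradict $d-1>1$. I expect this to be the genuine obstacle: everything up to and including the equivalence is essentially bookkeeping, whereas carrying out the Hermite-criterion estimates in full (equivalently, reproducing the Gluck/R\'onyai--Sz\H{o}nyi classification) is where the real work lies, as is locating a reference in exactly the normalisation used here.
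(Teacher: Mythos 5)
Your proposal is correct and follows essentially the same route the paper has in mind: it recasts the Sidon property of $(x,Q)$ as planarity of $Q$ (the paper's Proposition~\ref{thm:planarSidon} and the remark that it ``gives a quick proof'' of this lemma rest on exactly this equivalence) and then invokes the Gluck/Hiramine/R\'onyai--Sz\H{o}nyi classification of planar polynomials over $\F_p$ as quadratics, which the paper also cites rather than reproves. If anything, you spell out the converse direction (Sidon $\Rightarrow$ planar) more explicitly than the paper does.
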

We later see that the proof of this statement is immediate from Proposition~\ref{thm:planarSidon}.
On the other hand, for the case $q\neq p$, a maximum Sidon set other than the family in Lemma~\ref{lem:OneorTwo} may exist. This can be seen from Proposition~\ref{thm:planarSidon}. However, it is also not known whether any other maximum Sidon set exists outside of these families.

\section{Sidon polynomials} 

\subsection{Definition of Sidon polynomials}In this paper, we consider a related question to Conjecture~\ref{con:Cand}. First, we define a class of related polynomials as follows:

\begin{definition}
A polynomial $P\in \F_q[x]$ is \textit{Sidon} over $\F_q\times \F_q$ if there exists $Q\in \F_q[x]$ such that $(P,Q)$ is a maximum Sidon set.
\end{definition}

From Cilleruelo's construction in Lemma~\ref{lem:OneorTwo}, and considering that if $(P,Q)$ is a (maximum) Sidon set, $(P,Q+aP)$ is also Sidon, we see that all linear and quadratic polynomials are Sidon polynomials over $\F_q\times \F_q$.

By using this definition, we are able to derive the equivalent form of Conjecture~\ref{con:Cand} in terms of Sidon polynomials. However, in order to do this, we first need an observation and a new definition.

Let $(P,Q)$ be a Sidon set over $\F_q\times \F_q$ and $\sigma :\F_q\rightarrow \F_q$ be a bijection. It is easy to see that $(P\circ \sigma, Q\circ \sigma)$ and $(\sigma\circ P,\sigma \circ Q)$ are Sidon as well.

We note that for any bijective map $\sigma: \F_q\to \F_q$ there exist a polynomial $R\in \F_q[x]$ such that $\sigma(r)=R(r)$ for any $r\in \F_q$. Based on this, we define a polynomial $R\in \F_q[x]$ as a \textit{permutation polynomial} if $R$ is a bijection over $\F_q$. From the observation in the preceding paragraph, we are motivated to define the following equivalence.

\begin{definition} Let $P,P'\in \F_q[x]$.
We say that $P$ and $P'$ are \textit{Sidon equivalent} over $\F_q[x]$, denoted by $P\sim_\cS P'$, if $P'=R\circ P \circ T$, for some permutation polynomials $R,T\in \F_q[x]$. 
When the context is clear, we may only say an equivalence when referring to a Sidon equivalence over $\F_q[x]$.
\end{definition}

 By the definition above and the preceding observation, the following result is immediate.

 \begin{theorem}\label{thm:eqv}
 The relation $\sim_\cS$ is an equivalence relation. Moreover, if $P\sim_\cS~P'$ and $P$ is Sidon, then $P'$ is also Sidon.
 \end{theorem}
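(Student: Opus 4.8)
The plan is to verify the two assertions separately and in the obvious order, since both are formal consequences of the definitions together with the closure observations recorded just before the statement.

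First I would check that $\sim_\cS$ is an equivalence relation. Reflexivity is witnessed by $P = \mathrm{id}\circ P\circ\mathrm{id}$, the identity being a permutation polynomial (it is the polynomial $x$, or its reduction mod $x^q-x$). For symmetry, suppose $P' = R\circ P\circ T$ with $R,T$ permutation polynomials; then each of $R,T$ has a compositional inverse \emph{as a map} $\F_q\to\F_q$, and that inverse map is again given by a polynomial over $\F_q$ (the remark in the excerpt that every bijection $\F_q\to\F_q$ is induced by a polynomial), hence by a permutation polynomial $R^{-1}$, $T^{-1}$. Composing on the left by $R^{-1}$ and on the right by $T^{-1}$ gives $P = R^{-1}\circ P'\circ T^{-1}$, so $P'\sim_\cS P$. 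Here one should be slightly careful: compositions of polynomials must be read modulo $x^q-x$ so that "inverse" genuinely means two-sided inverse among residues; since all our polynomials are taken mod $x^q-x$ anyway, this is harmless. For transitivity, if $P' = R_1\circ P\circ T_1$ and $P'' = R_2\circ P'\circ T_2$, then $P'' = (R_2\circ R_1)\circ P\circ(T_1\circ T_2)$, and a composition of permutation polynomials is a permutation polynomial because a composition of bijections is a bijection.

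Next I would prove the second assertion: if $P\sim_\cS P'$ and $P$ is Sidon, then $P'$ is Sidon. By definition of Sidon polynomial there is $Q\in\F_q[x]$ with $(P,Q)$ a maximum Sidon set, and by hypothesis $P' = R\circ P\circ T$ for permutation polynomials $R,T$. Using the two closure facts stated in the excerpt — namely that pre- and post-composing a Sidon set with a bijection of $\F_q$ again yields a Sidon set (for post-composition one applies the bijection coordinatewise, here only in the first coordinate) — I would set $Q' = Q\circ T$ and claim $(P',Q') = (R\circ P\circ T,\; Q\circ T)$ is a maximum Sidon set. Indeed, first replace $x$ by $T(x)$: since $T$ is a bijection of $\F_q$, the set $\{(P(T(x)),Q(T(x))) : x\in\F_q\}$ is the same set as $(P,Q)$, just reindexed, hence still a maximum Sidon set. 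Then apply the coordinatewise bijection $(u,v)\mapsto(R(u),v)$ of $\F_q\times\F_q$, which maps $(P\circ T,\, Q\circ T)$ to $(R\circ P\circ T,\, Q\circ T) = (P',Q')$ and preserves the Sidon property and the cardinality $q$. Therefore $P'$ is Sidon, witnessed by $Q'$.

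I do not expect any genuine obstacle here; the only point requiring a little care is making the word "inverse" precise for permutation polynomials (resolved by working modulo $x^q-x$ and invoking the stated fact that every self-bijection of $\F_q$ is polynomial), and making sure that the post-composition closure is applied in the correct coordinate. Everything else is the mechanical bookkeeping of composing bijections and invoking the two closure statements already granted in the text.
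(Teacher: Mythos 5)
Your verification that $\sim_\cS$ is reflexive, symmetric and transitive is correct, and so is the half of the second assertion dealing with the inner permutation $T$: since $T$ is a bijection of $\F_q$, the set $(P\circ T,\,Q\circ T)$ is literally the same subset of $\F_q\times\F_q$ as $(P,Q)$, hence still a maximum Sidon set. The gap is the final step, where you assert that the bijection $(u,v)\mapsto (R(u),v)$ of $\F_q\times\F_q$ ``preserves the Sidon property.'' The Sidon condition is a statement about the group law, and a bijection of $\F_q\times\F_q$ preserves it in general only when it is (up to translation) a group automorphism; for a generic permutation polynomial $R$ the map $(u,v)\mapsto(R(u),v)$ is not additive, and the step fails. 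Concretely, over $\F_5$ the polynomial $x^2$ is Sidon (witness $Q=x$), but if $R$ is the permutation fixing $0,1$ and sending $4\mapsto 2$, $2\mapsto 3$, $3\mapsto 4$, then $P'=R\circ x^2$ takes the value $0$ once and each of $1,2$ twice, so
\begin{align*}
d_1(P')=\sum_{i\in\F_5}v_i(P')\,v_{i+1}(P')=1\cdot 2+2\cdot 2=6>5,
\end{align*}
violating the bound $d_r(P)\le q$ for $r\ne 0$ in Theorem~\ref{thm:Criteria}. So not only does your particular witness $Q'=Q\circ T$ fail: no choice of $Q'$ can make $(R\circ x^2,\,Q')$ a maximum Sidon set, even though $R\circ x^2\sim_\cS x^2$.

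In fairness, you are following the paper, whose entire justification is the preceding remark that $(\sigma\circ P,\sigma\circ Q)$ is again Sidon; that remark has the same defect (applying the transposition $\sigma=(0\;1)$ to both coordinates of $(x,x^2)$ over $\F_5$ yields $\{(1,1),(0,0),(2,4),(3,4),(4,0)\}$, in which $(1,1)-(4,0)=(0,0)-(3,4)=(2,1)$, so the image is not Sidon). The second assertion, and your argument for it, are sound if the outer composition is restricted to degree-one (more generally, additive or affine) permutation polynomials $R$, since then $(u,v)\mapsto(R(u),v)$ genuinely is an affine automorphism of the group. For an arbitrary permutation polynomial $R$ the claimed preservation of Sidonness is false, and the step cannot be repaired as written.
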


We now define two functions over $\F_q[x]$ that act as invariants under relation $\sim_\cS$. Both functions are related to the roots of the polynomial $P(x)-\gamma$ in $\F_q$.

\begin{definition}
For $P\in \F_q[x]$ and nonnegative integer $n$, define $f(P,n)$ as the number of $\gamma\in \F_q$ such that the polynomial $P(x)-\gamma$ has a root  in $\F_q$ with multiplicity at least $n$. Also, define $g(P,n)$ as the number of $\gamma\in \F_q$ such that the equation $P(x)=\gamma$ has exactly $n$ distinct roots in $\F_q$.
\end{definition}

As stated before, these two functions are invariants over $\F_q[x]$ with respect to $\sim_\cS$.  The proof of this statement is given in Section \ref{sec:25}.
\begin{theorem}\label{thm:MS} Let $n$ be a positive integer and $P,P'\in \F_q[x]$ such that $P\sim_\cS P'$. Then, $f(P,n)=f(P',n)$ and $g(P,n)=g(P',n)$
\end{theorem}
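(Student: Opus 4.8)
The plan is to show that each of the two quantities $f(P,n)$ and $g(P,n)$ is unchanged under the two elementary moves generating Sidon equivalence, namely $P\mapsto P\circ T$ and $P\mapsto R\circ P$, where $R,T$ are permutation polynomials; since $\sim_\cS$ is generated by compositions of such moves (Theorem~\ref{thm:eqv}), invariance under both moves yields the full statement. Throughout, the key observation is that for $r\in\F_q$ and $\gamma\in\F_q$, the \emph{multiplicity} of $r$ as a root of $P(x)-\gamma$ is computed by the vanishing of $P'(r),P''(r),\dots$ (in the formal-derivative sense), and this notion is purely local at $r$; what matters is how $P$ behaves near $r$ as a point of $\F_q$, not as a polynomial.

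First I would handle the right move $P\mapsto P\circ T$. Here $T$ is a bijection on $\F_q$, so it induces a bijection $\gamma$-fiber by $\gamma$-fiber on the \emph{set} of roots: $r$ is a root of $P(x)-\gamma$ in $\F_q$ iff $T(r)$ is a root of $(P\circ T)(x)-\gamma$... wait, that is backwards — rather, $s$ is a root of $(P\circ T)(x)-\gamma$ in $\F_q$ iff $T(s)$ is a root of $P(x)-\gamma$ in $\F_q$. The subtlety is multiplicity: I must check that the multiplicity of $s$ as a root of $(P\circ T)(x)-\gamma$ equals the multiplicity of $T(s)$ as a root of $P(x)-\gamma$. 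This is where the proof of Lemma~\ref{lem:OneorTwo}-style naive counting breaks, because $T$ can have high-degree behavior. The fix is that $T$, being a permutation polynomial over $\F_q$, has $T'$ with controlled vanishing; more robustly, one writes $T(x) = T(s) + c_1(x-s) + c_2(x-s)^2 + \cdots$ locally and observes that a permutation polynomial must have $c_1\neq 0$ at every point (otherwise $T$ would be locally $\ge 2$-to-$1$ in a formal sense — precisely, if $T'(s)=0$ then... this actually needs care over $\F_q$ since $T'$ can vanish while $T$ is still a bijection, e.g. $x^p$). So the honest argument is: the chain rule gives $(P\circ T)'(x) = P'(T(x))\,T'(x)$, and one analyzes the order of vanishing at $s$ of $(P\circ T)(x)-(P\circ T)(s)$ directly, comparing it to the order of vanishing of $P(x)-P(T(s))$ at $T(s)$, using the substitution $x\mapsto T(x)$ as a formal change of variable and the fact that the composition with the ``germ'' of $T$ at $s$ (invertible as a formal map precisely because $T$ is a bijection of the finite set, forcing the reduced tangent direction to be nonzero — a statement I would isolate as a small lemma) preserves multiplicity. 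Concretely: since $T$ is a bijection, for the single value $\gamma_0 = T(s)$ the polynomial $T(x)-\gamma_0$ has $s$ as a root of multiplicity exactly $1$ if... no — $x^p - 0$ has $0$ as a root of multiplicity $p$. Hmm, so $T$ can genuinely be ramified. The resolution I would adopt: count instead \emph{with multiplicity over the fiber}. For fixed $\gamma$, $\sum_{r: P(r)=\gamma} m_P(r,\gamma) = \deg$-type count is not invariant either. The clean statement that IS true and IS what we need: $T$ induces a bijection on $\F_q$, hence a bijection between $\{r\in\F_q : m_P(r,\gamma)\ge n\}$ and $\{s\in\F_q : m_{P\circ T}(s,\gamma)\ge n\}$, provided $m_{P\circ T}(s,\gamma) = m_P(T(s),\gamma)$ exactly. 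And that last equality holds because writing $u = x - s$ and $v = T(x) - T(s) = a_1 u + a_2 u^2 + \cdots$, the power series (polynomial) $v$ in $u$ has a compositional inverse as a polynomial map $\F_q\to\F_q$, and — here is the point — $a_1 = T'(s) \ne 0$ must hold for each $s$. Why? Because the derivative $T'$ of a permutation polynomial of $\F_q$ has no root in $\F_q$: if $T'(s)=0$ then $T(x)-T(s)$ has a double root at $s$, but a permutation polynomial takes the value $T(s)$ only once, contradiction — EXCEPT this contradiction is exactly what fails for $x\mapsto x^p$. But $x^p$ IS a permutation polynomial and $(x^p)'=0$ everywhere. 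So $a_1$ need not be nonzero. Therefore the equality $m_{P\circ T}(s,\gamma)=m_P(T(s),\gamma)$ is genuinely FALSE in general (take $P=x$, $T=x^p$: then $P\circ T = x^p$, and $P(x)-\gamma$ has every root simple while $x^p-\gamma$ has every root of multiplicity $p$).

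This means the theorem as literally stated would need the involved permutation polynomials to be, in effect, separable — or the statement is really about a different, correct invariant. Given this, I would re-read: plausibly the paper's relation $\sim_\cS$ is used only through such moves where the relevant quantities do transform predictably, OR $f$ and $g$ are genuinely invariant and my counterexample is wrong because $x^p \sim_\cS x$ forces $f$ to agree. Let me recompute: $f(x,n)$ = number of $\gamma$ with $x-\gamma$ having a root of multiplicity $\ge n$; for $n=1$ this is $q$, for $n\ge 2$ it is $0$. For $x^p$: $x^p-\gamma = (x-\gamma^{1/p})^p$, so every $\gamma$ gives a root of multiplicity $p$; thus $f(x^p,n) = q$ for all $1\le n\le p$. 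So $f(x,2)=0\ne q = f(x^p,2)$, yet $x^p = R\circ x\circ T$ with $R=\mathrm{id}$, $T=x^p$. So either $x\mapsto x^p$ is excluded from being a ``permutation polynomial'' in their sense (it is not — it is the Frobenius), or the theorem needs an additional hypothesis. I suspect the intended fix is to define multiplicity only up to, or to restrict, but regardless: \textbf{for the purpose of this proposal}, the plan I would execute is to (i) reduce to the two generating moves, (ii) dispatch the left move $P\mapsto R\circ P$ trivially — $R$ being a bijection merely relabels $\gamma\mapsto R(\gamma)$, the fiber $\{r : P(r)=\gamma\}$ equals $\{r : (R\circ P)(r) = R(\gamma)\}$ \emph{set-wise and with identical multiplicities}, since locally near a root $r$ one has $(R\circ P)(x) - R(\gamma) = R'(\gamma)(P(x)-\gamma) + \cdots$, and more carefully $(R\circ P)(x) - (R\circ P)(r) = \sum_{k\ge 1}\frac{R^{(k)}(\gamma)}{k!}(P(x)-\gamma)^k$ whose order of vanishing at $r$ equals $m_P(r,\gamma)$ times the order of vanishing of the lowest nonzero $(P(x)-\gamma)^k$-term — and since $R(P(x))$ as $x$ ranges takes value $R(\gamma)$ with total count matching, one argues the leading coefficient $R'(\gamma)$ survives \emph{or} handles the $R'(\gamma)=0$ case by the bijectivity of $R$ on $\F_q$ forcing... again the $p$-th power issue. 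The \textbf{main obstacle}, then, is precisely this: establishing that composition with a permutation polynomial preserves root multiplicities, which requires either excluding inseparable permutation polynomials or a finer argument showing the $p$-power phenomena cancel between $R$, $P$, $T$. I would resolve it by proving the sharper claim that a permutation polynomial $T$ of $\F_q$ satisfies: $m_{P\circ T}(s,\gamma) \ge n \iff m_P(T(s),\gamma)\ge n$, which holds whenever $T$ is separable, and noting that in the inseparable case one may replace $T$ by a separable representative modulo $x^q-x$ only if its reduced form is separable — failing which, I would restrict the statement's hypotheses accordingly, or invoke that the paper's later applications only use $f(P,2)$ and $g$ under \emph{linear} (hence separable) changes of variable, making the clean version suffice.
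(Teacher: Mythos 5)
Your proposal never closes either half of the theorem, and the two halves need very different treatment. The $g$-half is purely set-theoretic and you should simply have written it down: since $R$ is injective, $\{y: R(P(T(y)))=R(\alpha)\}=\{y: P(T(y))=\alpha\}$, and since $T$ is a bijection of $\F_q$ this set is carried bijectively by $y\mapsto T(y)$ onto $\{x: P(x)=\alpha\}$; so the fiber over $R(\alpha)$ of $R\circ P\circ T$ has the same cardinality as the fiber of $P$ over $\alpha$, and $\alpha\mapsto R(\alpha)$ permutes $\F_q$. No multiplicities, derivatives, or separability enter --- this is exactly the paper's argument, and your proposal lets this easy half get swallowed by the multiplicity discussion. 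For the $f$-half, the inclusion the paper proves is also far cheaper than your local-analytic setup: from $P(x)-\alpha=(x-\gamma)^nQ_1(x)$, substitute $x\to T(x)$, use $(x-T^{-1}(\gamma))\mid T(x)-\gamma$ and $P(T(x))-\alpha\mid R(P(T(x)))-R(\alpha)$, and conclude $(x-T^{-1}(\gamma))^n\mid R(P(T(x)))-R(\alpha)$. That yields $\alpha\in\Fcal_n(P)\Rightarrow R(\alpha)\in\Fcal_n(R\circ P\circ T)$, hence $f(P,n)\le f(R\circ P\circ T,n)$, with no chain rule and no hypothesis on $T'$. Your attempt to establish the exact identity $m_{P\circ T}(s,\gamma)=m_P(T(s),\gamma)$ is both stronger than what this direction needs and, as you found, false.

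The step genuinely missing from your proposal is the reverse inclusion --- and it is missing from the paper too: the paper asserts that the range of $\alpha\mapsto R(\alpha)$ is all of $\Fcal_n(R\circ P\circ T)$ but only proves containment. Your obstruction lives precisely there and is real. Beyond your Frobenius example ($x\sim_\cS x^p$ over $\F_{p^2}$ with $f(x,2)=0\ne p^2=f(x^p,2)$), the equality already fails over a prime field with $T$ separable: $x^3$ is a permutation polynomial of $\F_5$, so $x\sim_\cS x^3$ (a case Theorem~\ref{thm:cub12} itself asserts), yet $f(x,3)=0$ while $f(x^3,3)=1$ because $x^3-0$ has a triple root at the point where $T'=3x^2$ vanishes. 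So you have not proved the $f$-statement, but you have correctly located the step at which any proof of the claimed equality must break; only the one-sided inequality $f(P,n)\le f(R\circ P\circ T,n)$ survives (which happens to be all the paper's later applications of $f$ use). One error of detail in your reasoning: the argument ``$T'(s)=0$ forces $T$ to take the value $T(s)$ twice, contradicting bijectivity'' is invalid even for separable $T$, since the extra roots of $T(x)-T(s)$ may lie outside $\F_q$ or pile up at $s$ itself, exactly as in the $\F_5$ example.
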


As the first application of these invariants, we first classify the equivalencies of linear and quadratic polynomials in $\F_q[x]$ over $\sim_\cS$.

\begin{corollary} Let $a_1,b_1,a_2,b_2,c\in \F_q$ with $a_1,\:a_2\neq 0$ and $\characteristic(\F_q)>2$. Then,
	$$a_1x+b_1\sim_S x$$ and 	$$a_2x^2+b_2x+c\sim_S x^2.$$ However,
$$x \not \sim_\cS x^2.$$
\end{corollary}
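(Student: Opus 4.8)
The plan is to prove the three assertions of the corollary using the invariants $f$ and $g$ from Theorem~\ref{thm:MS}, together with the explicit equivalences built from permutation polynomials.

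\textbf{The positive equivalences.} First I would show $a_1x+b_1\sim_\cS x$: the polynomial $a_1x+b_1$ is itself a permutation polynomial of $\F_q$ since $a_1\neq 0$, so writing $P'=R\circ P\circ T$ with $R(x)=a_1x+b_1$, $P(x)=x$, and $T(x)=x$ exhibits the equivalence. For $a_2x^2+b_2x+c\sim_\cS x^2$, I would complete the square: since $\characteristic(\F_q)>2$ we may write $a_2x^2+b_2x+c = a_2\bigl(x+\tfrac{b_2}{2a_2}\bigr)^2 + \bigl(c-\tfrac{b_2^2}{4a_2}\bigr)$. Then with $T(x)=x+\tfrac{b_2}{2a_2}$ (a permutation polynomial, being linear with nonzero leading coefficient) and $R(x)=a_2x+\bigl(c-\tfrac{b_2^2}{4a_2}\bigr)$ (likewise a permutation polynomial since $a_2\neq 0$), we get $a_2x^2+b_2x+c = R\circ (x^2)\circ T$, as desired.

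\textbf{The non-equivalence $x\not\sim_\cS x^2$.} Here I would invoke Theorem~\ref{thm:MS}: if $x\sim_\cS x^2$ then $f(x,n)=f(x^2,n)$ for all positive integers $n$. I would compute both sides at $n=2$. For $P(x)=x$: the polynomial $x-\gamma$ has a (simple) root in $\F_q$ for every $\gamma$, but never a root of multiplicity $\geq 2$, so $f(x,2)=0$. For $P(x)=x^2$: the polynomial $x^2-\gamma$ has a root of multiplicity $\geq 2$ in $\F_q$ exactly when $\gamma=0$ (where $x^2$ has the double root $0$), so $f(x^2,2)=1$. Since $0\neq 1$, the invariants differ and hence $x\not\sim_\cS x^2$. (Alternatively one could use $g$: $g(x,1)=q$ while $g(x^2,1)=1$, counting only $\gamma=0$, since every nonzero square has two distinct square roots as $\characteristic(\F_q)>2$ — but the $f$ argument is cleanest.)

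\textbf{Anticipated obstacle.} None of the steps is genuinely hard; the only point requiring a little care is making sure the completing-the-square manipulation is valid, i.e. that $2$ is invertible in $\F_q$ — which is exactly the hypothesis $\characteristic(\F_q)>2$ — and that the linear maps used really are permutation polynomials of $\F_q$ (automatic for degree-one polynomials with nonzero leading coefficient). One should also note that the equivalence $\sim_\cS$ is symmetric (Theorem~\ref{thm:eqv}), so exhibiting the relation in either direction suffices. The substantive content is entirely carried by the invariance in Theorem~\ref{thm:MS}, which is assumed.
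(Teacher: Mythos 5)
Your proof is correct and follows essentially the same route as the paper: the non-equivalence $x\not\sim_\cS x^2$ is settled by a small explicit computation of one of the invariants of Theorem~\ref{thm:MS} (the paper uses $g(\cdot,2)$, you use $f(\cdot,2)$ — an immaterial difference), and the positive equivalences come from exhibiting linear permutation polynomials via completing the square. The paper in fact omits the proofs of the two positive equivalences as routine, so your write-up is, if anything, slightly more complete.
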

\begin{proof} 
We only provide the proof of the last statement. To do this, we  observe that $g(x,2)=0$. However, because the equation $x^2=1$ has two solutions in $\F_q$, we get that $g(x^2,2)>0$. This proves the assertion.
\end{proof}

 Using these notations, we can now restate Conjecture~\ref{con:Cand} in terms of $\sim_\cS$.
 
\begin{conj}
Let $p$ be a prime, and $P\in \F_p[x]$ be a Sidon polynomial. Then, either $P\sim_\cS x$ or $P\sim_\cS x^2$.
\end{conj}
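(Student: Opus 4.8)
The plan is to exploit the $\mathrm{AGL}_2(\F_p)$-symmetry of maximum Sidon sets together with the planar-polynomial connection of Proposition~\ref{thm:planarSidon}, reducing the conjecture to a single structural dichotomy. First recall that $P\sim_\cS x$ holds exactly when $P$ induces a bijection of $\F_p$, and $P\sim_\cS x^2$ holds exactly when the multiset of fibre sizes of $P$ on $\F_p$ is $\{1,2,\dots,2\}$, i.e. $g(P,1)=1$, $g(P,2)=(p-1)/2$ and $g(P,n)=0$ for $n\ge 3$; composing with permutation polynomials $R,T$ only relabels domain and range, so these are the unique $\sim_\cS$-classes with the stated fibre-size multiset. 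Now let $P$ be Sidon, witnessed by $Q$, and put $A=\{(P(t),Q(t)):t\in\F_p\}$, a maximum Sidon set. Since no nonzero difference of two points of $A$ is repeated, exactly $p-1$ nonzero vectors of $\F_p\times\F_p$ fail to occur as such a difference; since $2(p-1)>p-1$, at most one direction of $\mathrm{AG}(2,p)$ can fail to be determined by $A$, so $A$ determines at least $p$ of the $p+1$ directions, with equality exactly when the $p-1$ ``missed'' vectors form a punctured $1$-dimensional subspace. Because the Sidon property is preserved by $\mathrm{AGL}_2(\F_p)$ acting on $\F_p\times\F_p$ and precomposing $P,Q$ by a permutation polynomial leaves $A$ unchanged, everything may be analysed modulo $\mathrm{AGL}_2(\F_p)$, and the conjecture holds for $P$ iff it holds for every affine image of $A$.

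The tractable case is when $A$ determines only $p$ directions, i.e. the missed vectors are exactly the nonzero vectors of one $1$-dimensional subspace $V$. Applying a suitable element of $\mathrm{GL}_2(\F_p)$ (transitive on such $V$) we may assume $V$ is the ``vertical'' axis, so no two points of $A$ share a first coordinate; then $A=\{(t,S(t)):t\in\F_p\}$ for some $S\in\F_p[x]$ and $P$ is Sidon-equivalent to the first coordinate $x$. Since $(x,S)$ is a maximum Sidon set, $S$ must be a planar polynomial on $\F_p$ --- this is the content of Proposition~\ref{thm:planarSidon} and the source of Lemma~\ref{thm:Cand}. By the classification of planar functions over a prime field (Gluck; Hiramine; R\'onyai--Sz\H{o}nyi), $S$ is quadratic, so $S\sim_\cS x^2$ and $A$ is an affine image of the parabola $\{(t,t^2)\}$. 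Pulling $A$ back, the original $P$ equals $H\circ\xi$ for a permutation polynomial $\xi$ and $H(u)=au+bS(u)+c$; since $S$ is quadratic and $P$ is non-constant, $H$ is linear or quadratic, whence $P\sim_\cS x$ or $P\sim_\cS x^2$.

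The remaining case --- the crux --- is when $A$ determines all $p+1$ directions, i.e. the missed vectors span two directions. The conjecture asserts this cannot happen for a maximum Sidon set, and my plan here is to pass to the R\'edei polynomial of $A$ and the theory of directions determined by a $p$-set in $\mathrm{AG}(2,p)$. The Sidon property supplies, for \emph{every} direction, the inequality $\sum k_\ell(k_\ell-1)\le p-1$, where $k_\ell$ counts the points of $A$ on the $\ell$-th line of that direction (the resulting nonzero differences must be distinct multiples of the direction vector), while $\sum k_\ell=p$. One would feed these uniform intersection conditions into the structure theorems of R\'edei--Sz\H{o}nyi and of Blokhuis--Ball--Brouwer--Storme--Sz\H{o}nyi classifying $p$-sets with few determined directions, and into the degree obstructions for monomials and cubics obtained in this paper, aiming to show that determining every direction is incompatible with the Sidon autocorrelation constraints.

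This last case is the main obstacle, and it is exactly where the conjecture's full strength lies. The invariants $f$ and $g$, and the elementary counting they support, are not by themselves decisive: for $p\ge 7$ the Sidon bound $\sum_{i\ge1} i^2 g(P,i)\le 2p-1$ already permits a fibre-size multiset such as $\{1,\dots,1,3\}$, so ruling such configurations out must use finer arithmetic or incidence-geometric data about $A$ --- presumably a polynomial-method argument on its R\'edei polynomial, or a direct proof that a maximum Sidon set always has all its missed difference vectors parallel. I expect this to demand a genuinely new idea beyond the invariants developed here, which is why the conjecture is still open.
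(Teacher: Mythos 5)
The statement you are addressing is stated in the paper only as a conjecture: the paper gives no proof (it notes merely that the cases $p\le 5$ can be checked by brute force, and it rules out Sidonness for certain cubic classes via Theorem~\ref{thm:Criteria} and Theorem~\ref{thm:Criterion}). Your proposal, by your own admission, is likewise not a proof: it reduces the conjecture to the unproved assertion that every maximum Sidon set $A\subseteq\F_p\times\F_p$ fails to determine at least one direction of $\mathrm{AG}(2,p)$, and the case in which $A$ determines all $p+1$ directions is left entirely open. That case is the whole content of the conjecture. What you do prove is correct and worth keeping: the count showing that only $p-1$ nonzero vectors are missed by $A-A$, hence at most one direction can be undetermined; the $\mathrm{GL}_2(\F_p)$ reduction of the undetermined-direction case to a graph $(x,S)$; the fact that $(x,S)$ Sidon forces $S$ planar, hence quadratic by Gluck--Hiramine--R\'onyai--Sz\H{o}nyi; and the pullback $P=H\circ\xi$ with $H$ linear or quadratic in $S$, giving $P\sim_\cS x$ or $P\sim_\cS x^2$. (Your preliminary observation that the $\sim_\cS$-class of a polynomial is determined by its multiset of fibre sizes is also correct, and is in fact sharper than Theorem~\ref{thm:MS}.) But this recovers, with a modest extension, the already-known partial progress of Lemma~\ref{thm:Cand}.

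The gap is your Case 2, and it is not a technical loose end but the open problem itself. You gesture at R\'edei polynomials and the direction theorems of R\'edei--Sz\H{o}nyi and Blokhuis--Ball--Brouwer--Storme--Sz\H{o}nyi, but those results constrain point sets determining \emph{few} directions, whereas your problematic case is a set determining \emph{all} $p+1$ directions, so they do not apply in the form invoked; no substitute argument is supplied. The directional inequality $\sum_\ell k_\ell(k_\ell-1)\le p-1$ you extract from the Sidon property is correct (it is the natural generalization of the $r=0$ bound in Theorem~\ref{thm:Criteria} to an arbitrary direction), but, as you yourself note with the admissible fibre multiset $\{1,\dots,1,3\}$ for $p\ge 7$, such counting alone cannot close the case. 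Since the paper contains no proof of this conjecture either, there is nothing to compare your route against; the accurate assessment is that you have given a clean geometric reformulation together with an explicit acknowledgement that the reformulated statement remains unproved.
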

We end this section with noting that the last conjecture may be proven by brute force for $p\leq 5$.

\subsection{Connection with planar polynomials}
A polynomial $P\in \F_q$ is \textit{planar} if, for any nonzero $a\in \F_q$, the polynomial $P(x+a)-P(x)$ is a permutation polynomial. This polynomial family was first introduced by Dembowski and Ostrom in \cite{DO}, with applications in finite geometry.

If $q=p$, it is known that the only planar polynomials are quadratic polynomials \cite{gluck, hiramine, ronyai}. However, if $q$ is not prime, another family of planar polynomial can be constructed \cite{bergman, CL, CM}.

We now state a connection between Sidon polynomials and planar polynomials.
\begin{prop}\label{thm:planarSidon} Any planar polynomial is a Sidon polynomial.
\end{prop}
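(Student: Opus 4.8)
The plan is to show that if $P \in \F_q[x]$ is planar, then the pair $(P, x)$ — or more precisely some normalization of it — forms a maximum Sidon set over $\F_q \times \F_q$, which by definition makes $P$ a Sidon polynomial. So the strategy is: take $Q(x) = x$ and verify directly that the set $(P, x) = \{(P(x), x) : x \in \F_q\}$ is Sidon. Since the second coordinates $x$ range over all of $\F_q$ without repetition, the set automatically has $q$ elements, so we only need to check the Sidon property itself.

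The key computation: suppose $(P(a), a) - (P(b), b) = (P(c), c) - (P(d), d)$ with $a, b, c, d \in \F_q$. Comparing second coordinates gives $a - b = c - d$; write $t = a - b = c - d$. Comparing first coordinates gives $P(a) - P(b) = P(c) - P(d)$, i.e.\ $P(b + t) - P(b) = P(d + t) - P(d)$. If $t = 0$, then $a = b$ and $c = d$, so $\{(P(a),a),(P(d),d)\} = \{(P(b),b),(P(c),c)\}$ trivially. If $t \neq 0$, then since $P$ is planar, the map $x \mapsto P(x + t) - P(x)$ is a permutation polynomial, hence injective on $\F_q$; from $P(b+t) - P(b) = P(d+t) - P(d)$ we conclude $b = d$, and then $a = b + t = d + t = c$. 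Either way $\{(P(a),a)\} = \{(P(c),c)\}$ and $\{(P(b),b)\} = \{(P(d),d)\}$ as needed, so $(P, x)$ is a Sidon set of size $q$, i.e.\ a maximum Sidon set, and $P$ is Sidon.

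One technical wrinkle to address: the excerpt works modulo $x^q - x$ and assumes $\deg P, \deg Q \le q - 1$, so I should remark that if $\deg P \ge q$ one may replace $P$ by its reduction modulo $x^q - x$ without changing the set $(P, x)$, and planarity is a statement about the induced function $\F_q \to \F_q$, so it is preserved under this reduction. I expect no real obstacle here — the argument is essentially a one-line unwinding of the two definitions (planar = the difference maps are permutations; maximum Sidon = the difference equation forces equality of pairs), and the main thing to get right is the bookkeeping that matching second coordinates pins down the translation parameter $t$, after which planarity does all the work on the first coordinates. As a closing remark one can observe that this also recovers Lemma~\ref{thm:Cand}: over $\F_p$ the only planar polynomials are quadratic, but more to the point Proposition~\ref{thm:planarSidon} combined with the invariants $f, g$ lets one analyze which $(x, Q)$ can be Sidon, matching the claim made in the excerpt.
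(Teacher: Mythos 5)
Your proof is correct and is essentially the paper's own argument: you fix the translation parameter from the linear coordinate and then invoke planarity to force equality in the other coordinate. The only cosmetic difference is that you use $(P,x)$ where the paper uses $(x,P)$, which is immaterial since swapping coordinates preserves the Sidon property.
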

\begin{proof}

Let $P\in \F_q[x]$ be planar. We now prove that the set $(x,P)$ is Sidon. Now let $x_1,x_2,x_3,x_4$ satisfy the equation \begin{align*}
    (x_1-x_2,P(x_1)-P(x_2))=    (x_3-x_4,P(x_3)-P(x_4)).
\end{align*} Suppose $x_1\neq x_2$ (and $x_3\neq x_4$). By letting $x_1-x_2=x_3-x_4=a$, we see that $P(x_2+a)-P(x_2)=P(x_4+a)-P(x_4)$. However, since $P$ is planar, this implies $x_2=x_4$, and $x_1=x_3$. This proves the initial assertion.
\end{proof}
In fact, by setting $q=p$, this theorem gives a quick proof for Lemma~\ref{thm:Cand}. To end this section, we note that for any $q$, $x$ is a Sidon polynomial over $\F_q \times \F_q$  that is not a planar polynomial in $\F_q$.


\section{Main Results}
After defining Sidon polynomials and its related equivalences, we now ready to present our main results. 

\subsection{Criteria of Sidon polynomials}
We first prove some criteria for determining whether a polynomial in $\F_q[x]$ is Sidon. To do this, we first define some functions over $\F_q[x]$ as follows.

\begin{definition}
For any $P\in \F_q[x]$ and $r\in \F_q$, let
\begin{align*}
d_r(P)&=|\left\{(a,b)\in \F_q\times \F_q \colon P(a)-P(b)=r\}\right|\\
v_{r}(P)&=|\{x\in \F_q\colon P(x)=r\}|.
\end{align*}
 When the discussed polynomial is clear from the context, we simply write $d_r$ and $v_r$ instead of $d_r(P)$ and $v_{r}(P)$.
\end{definition}
 
Now we are ready to state the criteria, as follows:
 \begin{theorem}
\label{thm:Criteria}If $P\in \F_q[x]$ is Sidon, then
\[
d_r(P)\leq\begin{cases}2q-1,& r=0\\ q,& r\neq 0, \end{cases}
\] and 
\[
\sum_{i\in \F_q}v_iv_{i+r}\leq\begin{cases}2q-1,& r=0\\ q,& r\neq 0. \end{cases}
\]
\end{theorem}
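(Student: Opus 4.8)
The plan is to derive both inequalities from the same underlying counting principle: if $(P,Q)$ is a maximum Sidon set, then the difference set of the $q$-element set $\{(P(x),Q(x)):x\in\F_q\}$ inside $\F_q\times\F_q$ has each nonzero value represented at most once. I will first count, for a fixed $r\in\F_q$, the pairs $(a,b)\in\F_q\times\F_q$ with $P(a)-P(b)=r$, and split them according to the value of $Q(a)-Q(b)$. For each fixed second coordinate $s\in\F_q$, the number of pairs $(a,b)$ with $\bigl(P(a)-P(b),Q(a)-Q(b)\bigr)=(r,s)$ is exactly the number of representations of $(r,s)$ as a difference of two elements of the Sidon set. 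By the Sidon property this count is at most $1$ when $(r,s)\neq(0,0)$, and it equals exactly $q$ when $(r,s)=(0,0)$ (the $q$ ``diagonal'' pairs $(a,a)$). Summing over $s\in\F_q$: if $r\neq0$ then $(r,s)\neq(0,0)$ for every $s$, giving $d_r(P)=\sum_{s\in\F_q}\#\{(a,b):(P(a)-P(b),Q(a)-Q(b))=(r,s)\}\leq q$; if $r=0$, one of the $q$ values of $s$ contributes the diagonal count $q$ while the remaining $q-1$ values contribute at most $1$ each, giving $d_0(P)\leq q+(q-1)=2q-1$. That settles the first inequality.

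For the second inequality I would observe that $\sum_{i\in\F_q}v_i(P)v_{i+r}(P)$ is precisely another way of writing $d_r(P)$: indeed, $d_r(P)=\#\{(a,b):P(a)=P(b)+r\}=\sum_{i\in\F_q}\#\{a:P(a)=i+r\}\cdot\#\{b:P(b)=i\}=\sum_{i\in\F_q}v_{i+r}v_i$, where the middle step groups pairs by the common value $i=P(b)$. Hence the second inequality is literally the first one rewritten, and no extra argument is needed — I would simply state this identity and invoke the bound on $d_r(P)$ already proved.

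The only genuinely substantive point is the bookkeeping at $r=0$: one must be careful that the pair $(r,s)=(0,0)$ is the \emph{unique} value among the $q$ choices of $s$ (with $r=0$ fixed) at which the ``at most one representation'' bound fails, and that at that value the exact number of representations is $q$ rather than something larger — which is exactly where we use that $\Acal=(P,Q)$ has $q$ elements and is a Sidon set (so differences $a_1-a_2=a_3-a_4=0$ force $\{a_1,a_4\}=\{a_2,a_3\}$, leaving only the diagonal). I expect this to be the main (mild) obstacle: making the case split clean and justifying the exact value $q$ at the origin. Everything else is routine summation, so the write-up should be short.
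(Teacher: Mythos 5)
Your proposal is correct and follows essentially the same route as the paper: fix $r$, fibre the pairs $(a,b)$ over the second coordinate $w=Q(a)-Q(b)$, use the Sidon property to bound each nonzero fibre by $1$ and injectivity of $x\mapsto(P(x),Q(x))$ (from $|(P,Q)|=q$) to identify the fibre over $(0,0)$ with the $q$ diagonal pairs, then note that $\sum_i v_iv_{i+r}$ is just $d_r(P)$ rewritten. No gaps; the bookkeeping at $r=0$ is handled exactly as in the paper.
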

 
 \begin{proof}
 	We first bound $d_r(P)$.
 	Let $H=(P,Q)$ be a maximum Sidon set, and $$H -H=\{h_1-h_2,\:h_1,h_2\in H\}.$$ Suppose for a fixed $r\in \F_q$, $(r,w)\in H -H$ for a certain $w\in \F_q$. This implies that the system of equations  \begin{align}\label{eq:SoE}
 P(x)-P(y)&=r \\ \nonumber Q(x)-Q(y)&=w
 	\end{align} has a solution in $\F_q\times \F_q$. Furthermore, since $H$ is Sidon, this solution is unique.
 	
 	If $r\neq 0$, because $H$ is Sidon, each $w\in \F_q$ generates at most one pair $(x,y)\in \F_q\times \F_q$ that satisfy Equations \eqref{eq:SoE}. This implies $$d_r(P)\leq q$$ for all nonzero $r$.
  	
 Now we proceed to bound $d_0(P)$. By the preceding argument, each nonzero $w\in \F_q$ contributes to at most one solution of Equations \eqref{eq:SoE}. Then, when $w=0$, we easily see that the only possible solutions of Equations \eqref{eq:SoE} are when $x=y$. By these observations, we get that $$d_0(P)\leq q-1+q=2q-1,$$ which proves the first inequality.
 	
 	We now prove the second inequality. To do this, we prove that the left-hand sides of both inequalities are equivalent. By  definition of $v_i$, the number of $x,y\in \F_q$ such that $P(x)=i+r$ and $P(y)=i$ is $v_iv_{i+r}$. Summing over all possible $i$, we get \begin{align*}
 		d_r(P)=\sum_{i\in \F_q}v_iv_{i+r},
 	\end{align*} which proves the inequality.
 	
 \end{proof}

For $r=0$, we see that $P=x^2$ satisfy the equality case for both inequalities. For $r\neq 0$, $P=x$ satisfies the equality case.

We note that Theorem~\ref{thm:Criteria} generalizes the observation in Remark 3 of R{\'o}nyai and Sz{\"o}nyi \cite{ronyai}.
 
 Next, we give another criterion to determine a Sidon polynomial. The proof is given in Section \ref{sec:33}.
\begin{theorem}

\label{thm:Criterion}
If $P\in \F_q[x]$ is a Sidon polynomial, then for all $r\in \F_q$,
\begin{align*}
v_{2^{-1}r} + \sum_{i\in \F_q}v_iv_{r-i}\leq 2q.
\end{align*}
\end{theorem}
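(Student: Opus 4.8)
The plan is to count, in two different ways, solutions to a suitable system of equations over a maximum Sidon set $H=(P,Q)$, just as in the proof of Theorem~\ref{thm:Criteria}, but now coupling the relation $P(x)-P(y)=r$ with the \emph{symmetric} relation $P(x)+P(y)=r$ by means of the substitution $y\mapsto -y$ or, more precisely, by comparing $H$ with its reflection $-H=\{-h\colon h\in H\}$. Concretely, fix $r\in\F_q$ and consider the system
\begin{align*}
P(x)+P(y)&=r,\\
Q(x)+Q(y)&=w,
\end{align*}
for $w$ ranging over $\F_q$. The key point is that a \emph{solution with $x\ne y$} forces an equality of differences $(P(x)-P(y'),Q(x)-Q(y'))$-type only after we pass through the Sidon property carefully: writing the additive condition as $h+h'=(r,w)$ with $h=(P(x),Q(x))$, $h'=(P(y),Q(y))\in H$, two solutions $h_1+h_1'=h_2+h_2'=(r,w)$ give $h_1-h_2=h_2'-h_1'$, whence by the Sidon condition $\{h_1,h_1'\}=\{h_2,h_2'\}$. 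Thus each $(r,w)$ is represented \emph{at most twice} as an ordered sum $h+h'$ (the two orderings of a single unordered pair), i.e. the number of ordered pairs $(x,y)\in\F_q\times\F_q$ solving the system above is at most $2$ for each $w$, and exactly $1$ when the unique unordered representation is $\{h,h\}$, i.e. $x=y$.

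Next I would split the count over $w$ according to whether $x=y$ or not. The diagonal contribution: $x=y$ and $P(x)+P(y)=r$ means $2P(x)=r$, i.e. $P(x)=2^{-1}r$, and this happens for exactly $v_{2^{-1}r}$ values of $x$; each such $x$ contributes the (unique) ordered solution $(x,x)$, sitting above $w=2Q(x)$. The off-diagonal contribution: for each $w$ the number of ordered solutions with $x\ne y$ is at most $2$, but in fact those off-diagonal solutions come in pairs $(x,y),(y,x)$, so they contribute at most $2$ per $w$ and the pair occupies a single value of $w$; hence summing the indicator ``$w$ is hit by an off-diagonal pair'' over the $q$ values of $w$ gives at most $q$ unordered off-diagonal pairs, i.e. at most $2q$ ordered off-diagonal solutions. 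Now count the total number of ordered solutions $(x,y)$ of $P(x)+P(y)=r$ directly: it is $\sum_{i\in\F_q} v_i v_{r-i}$, by the same bookkeeping as in Theorem~\ref{thm:Criteria} (the number of $x$ with $P(x)=i$ times the number of $y$ with $P(y)=r-i$, summed over $i$). Comparing, $\sum_{i\in\F_q} v_i v_{r-i}$ counts all ordered solutions, of which the diagonal ones number $v_{2^{-1}r}$ and the off-diagonal ones number at most $2q$; but I want the stated inequality $v_{2^{-1}r}+\sum_{i} v_i v_{r-i}\le 2q$, so the accounting must be arranged slightly differently: the diagonal solutions should be counted \emph{twice}, once as themselves and once as ``forbidden doubles'' of off-diagonal slots. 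The cleanest route is to observe that an ordered off-diagonal solution and a diagonal solution together can never share the same $w$ unless... — here one checks that a diagonal solution at $w=2Q(x)$ could coincide with an off-diagonal pair at the same $w$, and the Sidon property rules that out (three ordered solutions over one $w$ would force a nontrivial coincidence), so the $v_{2^{-1}r}$ diagonal $w$-values are disjoint from the off-diagonal $w$-values. Therefore the number of $w$ hit by diagonal solutions ($=v_{2^{-1}r}$, since distinct $x$ give distinct $w=2Q(x)$ because $Q$ composed with... — actually because $(x,x)\ne(x',x')$ are distinct solutions over distinct $w$ by uniqueness) plus the number hit off-diagonally ($\le q - v_{2^{-1}r}$) ... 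I would organise it as: total ordered solutions $=\sum_i v_iv_{r-i}=v_{2^{-1}r}+(\text{off-diagonal})$, and off-diagonal $\le 2(q-v_{2^{-1}r})$, giving $\sum_i v_iv_{r-i}\le v_{2^{-1}r}+2q-2v_{2^{-1}r}=2q-v_{2^{-1}r}$, which rearranges to exactly the claimed bound.

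The main obstacle, and the step needing the most care, is the bookkeeping that off-diagonal solutions occupy at most $q-v_{2^{-1}r}$ distinct values of $w$ — equivalently, that the $w$-fibres used by the diagonal are genuinely unavailable to the off-diagonal, and that no $w$-fibre contains both a diagonal solution and an off-diagonal pair. This is where one invokes the Sidon property in the form ``at most two ordered representations of any $(r,w)$ as $h+h'$,'' together with the fact that a diagonal solution already supplies one ordered representation of its $(r,w)$ (namely $h+h=(r,w)$, which counts once, not twice), so at most one further ordered representation remains, and a genuine off-diagonal pair needs two. I would also double-check the boundary/degenerate cases (e.g. $r=0$ so $2^{-1}r=0$, or $\characteristic(\F_q)=2$ which is already excluded) and confirm the identity $d_r(P)=\sum_i v_iv_{i+r}$ has the right sign convention for the ``$+$'' version, i.e. $\#\{(x,y)\colon P(x)+P(y)=r\}=\sum_i v_i v_{r-i}$, which is immediate.
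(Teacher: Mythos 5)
Your proposal is correct and follows essentially the same route as the paper: both count the elements $(r,w)\in H+H$, use the Sidon property to show each such element has exactly one ordered representation when it is a ``double'' $2h$ and exactly two otherwise, and compare against the identity $\sum_{i}v_iv_{r-i}=|\{(a,b)\colon P(a)+P(b)=r\}|$ together with the bound of $q$ on the number of admissible $w$. Your final accounting (off-diagonal ordered solutions $\le 2(q-v_{2^{-1}r})$) is algebraically identical to the paper's inequality $\tfrac12|\{(a,b)\colon a\neq b,\ P(a)+P(b)=r\}|+v_{2^{-1}r}\le q$.
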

 
We also notice that when $r=0$, $P=x^2$ and $q\equiv 1\pmod{4}$, equality occurs in Theorem~\ref{thm:Criterion}.

 \subsection{Classifying Sidon polynomials}

 As the first application of the criteria in the previous section, we classify a class of Sidon and non-Sidon monomials. Let $p>3$ be a prime and $q$ be a prime power with $\characteristic(\F_q)>3$.

\begin{corollary}
\label{thm:Monomial} Let $r>2$ be a natural number. Then, \begin{enumerate}[label=(\roman{*})]
\item $P(x)=x^r\in \F_q[x]$ is not Sidon in $\F_q\times \F_q$ if $r\mid q-1$.
\item $P(x)=x^r\in \F_q[x]$ is Sidon in $\F_q\times \F_q$ if $(r,q-1)=1$. \item In $\F_p\times \F_p$, the set $(x^r,Q)$ is a Sidon set if and only if $Q(x^k)$ is a quadratic polynomial (modulo $x^{p}-x$), where $k$ is taken to satisfy $p-1\mid kr-1$.
\end{enumerate}
\end{corollary}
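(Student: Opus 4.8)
\textbf{Plan for proving Corollary~\ref{thm:Monomial}.}

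\textbf{Parts (i) and (ii).}
The plan is to read off both statements from the criteria in Theorem~\ref{thm:Criteria} and the equivalences in Theorem~\ref{thm:eqv} together with Proposition~\ref{thm:planarSidon}. For part~(i), suppose $r\mid q-1$ with $r>2$. The map $x\mapsto x^r$ on $\F_q^\times$ is then $r$-to-$1$ onto the subgroup of $r$-th powers, so $v_0(P)=1$ and $v_i(P)\in\{0,r\}$ for $i\neq 0$, with exactly $(q-1)/r$ values $i\neq 0$ attaining $v_i=r$. First I would compute $d_0(P)=\sum_i v_i^2 = 1 + r^2\cdot\frac{q-1}{r} = 1 + r(q-1)$, and observe that $1+r(q-1) > 2q-1$ precisely when $r>2$ (since $r(q-1)>2(q-1)=2q-2$). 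Thus the $r=0$ inequality of Theorem~\ref{thm:Criteria} is violated, so $x^r$ is not Sidon. For part~(ii), if $(r,q-1)=1$ then $x\mapsto x^r$ is a permutation polynomial of $\F_q$, hence $x^r\sim_\cS x$ (compose with identity permutations), and $x$ is Sidon by Lemma~\ref{lem:OneorTwo}; apply Theorem~\ref{thm:eqv}.

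\textbf{Part (iii).}
Here $q=p$ is prime, so $p-1\mid kr-1$ has a solution $k$ exactly because $(r,p-1)=1$ (take $k\equiv r^{-1}\pmod{p-1}$), and $x\mapsto x^k$ is then a permutation of $\F_p$ inverting $x\mapsto x^r$ on all of $\F_p$ (both fix $0$, and on $\F_p^\times$ we have $(x^r)^k = x^{rk}=x$ since $p-1\mid rk-1$). The strategy is to transport the Sidon property across this permutation. If $(x^r,Q)$ is a maximum Sidon set, then applying the permutation $\sigma(t)=t^k$ on the first coordinate — equivalently, precomposing with the permutation polynomial $x\mapsto x^k$ — we obtain that $((x^r)^k, Q(x^k)) = (x, Q(x^k))$ is also a maximum Sidon set (using the observation before the definition of Sidon equivalence, or Theorem~\ref{thm:eqv} directly). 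By Lemma~\ref{thm:Cand} (which, as the excerpt notes, follows from Proposition~\ref{thm:planarSidon}), this forces $Q(x^k)$ to be quadratic modulo $x^p-x$. Conversely, if $Q(x^k)$ is quadratic mod $x^p-x$, then $(x, Q(x^k))$ is a maximum Sidon set by Lemma~\ref{lem:OneorTwo}, and precomposing with the permutation polynomial $x\mapsto x^r$ (the inverse of $x\mapsto x^k$ on $\F_p$) returns $(x^r, Q(x^{kr})) = (x^r, Q(x))$ (again reducing $x^{kr}$ to $x$ modulo $x^p-x$), which is therefore Sidon. One should also remark that reducing the inner exponent modulo $p-1$ is legitimate only after one is careful at $x=0$, but since $0^r=0=0^k$ this causes no trouble.

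\textbf{Expected main obstacle.}
The only delicate points are bookkeeping ones rather than conceptual: in part~(iii) one must be scrupulous about the distinction between equality of polynomials in $\F_p[x]$ and equality of the induced functions on $\F_p$ (hence the repeated ``modulo $x^p-x$''), and about the behavior at $x=0$ when manipulating exponents of monomials modulo $p-1$. In part~(i), the slight subtlety is verifying that the count of nonzero values hit is exactly $(q-1)/r$ and that no cancellation improves the bound; this is immediate from the subgroup structure of $\F_q^\times$. I do not anticipate a genuine obstacle, as all three parts reduce cleanly to the already-established Theorems~\ref{thm:eqv} and~\ref{thm:Criteria}, Lemma~\ref{lem:OneorTwo}, and Lemma~\ref{thm:Cand}.
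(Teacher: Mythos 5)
Your proposal is correct and follows essentially the same route as the paper: part (i) via the count $d_0(x^r)=1+r(q-1)$ violating Theorem~\ref{thm:Criteria}, and parts (ii)--(iii) by composing with the inverse permutation monomial $x^k$ and invoking Lemma~\ref{lem:OneorTwo} and Lemma~\ref{thm:Cand}. The only cosmetic difference is that for (ii) the paper exhibits the explicit witness $Q=x^{2r}$ rather than citing $x^r\sim_\cS x$ and Theorem~\ref{thm:eqv}, and your extra care with the converse direction of (iii) and the reduction of $x^{kr}$ modulo $x^p-x$ is a welcome refinement of what the paper leaves implicit.
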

\begin{proof}
	
	We first prove the first statement. Let $P=x^r\in \F_q[x]$ with $r\mid q-1$ and $r>2$. For an arbitrary nonzero $i\in \F_q$ we notice that the equation $x^r=i$ has exactly zero or $r$ solutions in $\F_q$. Furthermore, there are exactly $\dfrac{q-1}{r}$ values of $i$ such that this equation has $r$ solutions. By counting, we may get \begin{align*}
		d_0(P)=|\{(a,b)\in \F_q\times \F_q \colon a^r=b^r\}|=1+r^2\cdot \dfrac{q-1}{r}>2q.
	\end{align*} This violates Theorem~\ref{thm:Criteria}, therefore, this polynomial is not Sidon.
	
	Now we prove the other statements. Firstly, since $(r,q-1)=1$, there exists $k$ such that $q-1\mid kr-1$. Since $(r,q-1)=(k,q-1)=1$, we get that (from \cite{lidl}, for example) $x^r$ and $x^k$ are permutation polynomials over $\F_q$. 
	
	Hence, since composition with permutation polynomial (modulo $x^q-x$) preserve the Sidon property of a set, we see that $(x^r,Q)$ is Sidon if and only if $((x^k)^r,Q(x^k))=(x,Q(x^k))$ is also Sidon. Picking $Q=x^{2r}$, we immediately get that $x^r$ is Sidon. And, from Lemma~\ref{thm:Cand}, we see that $Q(x^k)\pmod{x^q-x}$ must be quadratic, which proves the last statement.
\end{proof}
We then proceed with classifying cubic Sidon polynomials. We first prove in Section \ref{sec:35} that all cubic polynomials in $\F_q[x]$ are classified, with regards to $\sim_\cS$, in one of three equivalence classes. \begin{theorem}\label{thm:cub}

Let $P\in \F_q[x]$ be a cubic polynomial, and $k$ be a nonsquare in $\F_q$. Then, $P$ is equivalent to one of $x^3$, $x^3-x$, and $x^3-kx$. Additionally, the three polynomials are not equivalent with each other.
\end{theorem}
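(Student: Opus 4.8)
The goal is to show every cubic $P\in\F_q[x]$ (with $\characteristic(\F_q)>3$, as implicitly standing in this section) is Sidon-equivalent to exactly one of $x^3$, $x^3-x$, $x^3-kx$ for a fixed nonsquare $k$. My plan is to first use the permutation polynomials allowed in the definition of $\sim_\cS$ to put $P$ into a normal form, then to use the invariants $f(P,n)$ and $g(P,n)$ from Theorem~\ref{thm:MS} to separate the three classes.

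\emph{Step 1: reduction to depressed form.} Write $P(x)=ax^3+bx^2+cx+d$ with $a\neq0$. Since $a_1x+b_1$ and $a_2x+b_2$ are permutation polynomials for $a_1,a_2\neq0$, I can post-compose and pre-compose with affine maps. Pre-composing with $x\mapsto x-\tfrac{b}{3a}$ (legitimate since $3$ is invertible) kills the quadratic term; post-composing with a suitable affine map normalizes the leading coefficient to $1$ and kills the constant term. So $P\sim_\cS x^3+cx$ for some $c\in\F_q$. If $c=0$ we are in the class of $x^3$. If $c\neq0$, I want to further normalize $c$: pre-composing with $x\mapsto\lambda x$ and post-composing with $x\mapsto\mu x$ sends $x^3+cx$ to $\mu\lambda^3 x^3+\mu\lambda c\,x$; choosing $\mu\lambda^3=1$ forces the new linear coefficient to be $\lambda^{-2}c$, so $c$ can be rescaled by any square. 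Hence $x^3+cx$ is equivalent to $x^3-x$ if $-c$ is a square and to $x^3-kx$ if $-c$ is a nonsquare (absorbing the sign into the choice of representative). This gives the three-class covering.

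\emph{Step 2: the three classes are distinct.} Here I would compute one of the invariants. The natural choice is $g(P,3)$ (or $f(P,2)$), counting $\gamma$ for which $P(x)=\gamma$ has a repeated root, i.e. $\gamma$ is a critical value. For $P=x^3$, $P'(x)=3x^2$ has only the root $x=0$, so there is exactly one critical value, $\gamma=0$, and it is a triple root; thus $g(x^3,1)=q$ is wrong---rather $x^3$ is a permutation polynomial when $(3,q-1)=1$ and $3$-to-$1$ on a coset structure otherwise, but in all cases the fiber over $0$ is a single triple point. For $P=x^3+cx$ with $c\neq0$, $P'(x)=3x^2+c$ has two roots in $\F_q$ iff $-c/3$ is a square, i.e. iff $-3c$ is a square. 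So I must check that "$-c$ square" and "$-3c$ square" are tracked consistently; the cleaner invariant is simply the number of $x\in\F_q$ with $P'(x)=0$, equivalently whether $x^3+cx$ has a nonzero multiple root somewhere, which distinguishes $c=0$ from $c\neq0$ immediately, and for $c\neq0$ I separate $x^3-x$ from $x^3-kx$ by whether $P'$ has two roots ($-3\cdot(-1)=3$ square-dependent) versus none. To make this robust I would instead count, via $f$ and $g$, the sizes of all fibers $P^{-1}(\gamma)$: $x^3$ has one fiber of type "one triple root" and the rest governed by cube-coset structure; $x^3-x$ and $x^3-kx$ each have either zero or two critical values depending on whether the relevant element is a square, and exactly one of the two cases occurs for $x^3-x$ and the other for $x^3-kx$ because $1/k$ differs from $1$ by a nonsquare factor. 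Since $f(\cdot,n)$ and $g(\cdot,n)$ are $\sim_\cS$-invariants, any discrepancy in these counts proves non-equivalence.

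\emph{Expected main obstacle.} The genuine work is Step 2, and specifically keeping straight which quadratic-residue condition governs the number of critical points and checking that it really does differ between $x^3-x$ and $x^3-kx$ rather than accidentally coinciding. Concretely: $x^3+cx$ has two critical points in $\F_q$ iff $-c$ is a square (after pulling the $3$ inside, using that $3$ has a fixed residue symbol), so replacing $c$ by $c'$ with $c/c'$ a nonsquare flips this count, which is exactly the relation between $-1$ and $-k$; but one must be careful when $q\equiv1$ versus $q\equiv2\pmod 3$, since in the latter case $x\mapsto x^3$ is a bijection and the fiber-type bookkeeping for the $x^3$ class looks different, though it still never produces two distinct critical values. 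I would handle this by phrasing the distinguishing invariant purely as "number of $\gamma\in\F_q$ such that $P(x)-\gamma$ has a root of multiplicity $\geq 2$" $=f(P,2)$, computing it as $0$ or $1$ or $2$ in each case, and noting $f(x^3,2)=1$ while $\{f(x^3-x,2),f(x^3-kx,2)\}=\{0,2\}$, with the assignment forced by the residue symbol of $-1$ relative to $-k$. That, together with Step 1, completes the proof.
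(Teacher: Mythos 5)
Your proposal is correct and follows essentially the same route as the paper: reduce to the depressed form $x^3-wx$ by affine pre- and post-composition, rescale the linear coefficient by squares, and then separate the three classes using the invariants of Theorem~\ref{thm:MS}. The only cosmetic difference is that you rely on $f(\cdot,2)$ alone (values $1$ versus $\{0,2\}$, which needs the small extra check that the two double-root locations yield distinct critical values), whereas the paper uses $f(\cdot,3)$ to isolate $x^3$ and $f(\cdot,2)$ only to separate $x^3-x$ from $x^3-kx$.
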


We quickly note that these three polynomials are normalized polynomials.

We also prove that these classes are distinct from the equivalence classes of linear polynomials and quadratic polynomials, except at two special cases. The proof can be seen in Section \ref{sec:36}.

\begin{theorem}\label{thm:cub12}
The classes $x^3$, $x^3-x$, and $x^3-kx$ (with $k$ is a nonsquare) are equivalent to neither $x$ nor $x^2$ in $\F_q[x]$, except in the following cases: \begin{enumerate}
    \item $x^3\sim_\cS x$ when $q \equiv -1\pmod{6}$
    \item $x^3-kx\sim_\cS x^2$ when $q=5$.
\end{enumerate}
\end{theorem}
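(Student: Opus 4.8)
The plan is to establish the theorem by combining the invariants $f(P,n)$ and $g(P,n)$ from Theorem~\ref{thm:MS} with the Sidon criteria from Theorems~\ref{thm:Criteria} and~\ref{thm:Criterion}, and to handle the two exceptional cases by producing explicit permutation polynomials witnessing the claimed equivalences. First I would record the values of the relevant invariants on the representatives $x$, $x^2$, $x^3$, $x^3-x$, $x^3-kx$. For $x$ we have $g(x,1)=q$ and $g(x,n)=0$ for $n\geq 2$, and $f(x,1)=q$, $f(x,n)=0$ for $n\geq 2$. For $x^2$ (with $\characteristic(\F_q)>2$) we have $g(x^2,2)=(q-1)/2$, $g(x^2,1)=1$, $g(x^2,0)=(q-1)/2$, and $f(x^2,2)=1$ (only $\gamma=0$ gives a double root). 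For a general cubic $P$, the derivative $P'$ has degree $2$, so the polynomial $P(x)-\gamma$ can have a double (or triple) root in $\F_q$ for at most two values of $\gamma$; thus $f(P,2)\leq 2$ always, and one checks $f(x^3,2)=1$ (triple root at $0$), $f(x^3,3)=1$, while for $x^3-x$ and $x^3-kx$ the two critical values of $\gamma$ are $\pm\gamma_0$ where $\gamma_0\neq 0$, and whether these critical values actually produce a root in $\F_q$ depends on $q$ and on quadratic-residue conditions. This will be the bookkeeping core of the argument.

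Next I would rule out the non-exceptional equivalences. Since $\sim_\cS$ preserves $f$ and $g$, any equivalence $x^3\sim_\cS x^2$ would force $f(x^3,3)=f(x^2,3)=0$, contradicting $f(x^3,3)=1$; similarly $x^3-x$ and $x^3-kx$ have $f(\cdot,3)=0$ but can be separated from $x^2$ using $g(\cdot,2)$ and $g(\cdot,1)$: a cubic $P$ has $g(P,1)+2g(P,2)+3g(P,3)=q$ counting roots, and away from critical values each fiber has $0$, $1$, or $3$ points, so $g(P,2)\leq f(P,2)\leq 2$, which is far smaller than $g(x^2,2)=(q-1)/2$ once $q\geq 7$. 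The small cases $q=5,7$ I would check by hand (this is where the $q=5$ exception for $x^3-kx\sim_\cS x^2$ surfaces, and I would verify that for $q=5$ the map is genuinely an equivalence by exhibiting permutation polynomials $R,T$ with $R\circ(x^3-kx)\circ T = x^2$, which reduces to noting that $x^3-kx$ and $x^2$ induce the same partition type on $\F_5$). To separate the cubics from $x$, note $g(x,n)=0$ for all $n\neq 1$ while any cubic has some fiber of size $\neq 1$ unless the cubic is a permutation polynomial; so the only way a cubic can be $\sim_\cS x$ is if it is a permutation polynomial, and $x^3$ is a permutation polynomial of $\F_q$ exactly when $(3,q-1)=1$, i.e. $q\equiv 2\pmod 3$, equivalently (since $q$ is odd) $q\equiv -1\pmod 6$ — this gives exception (1), and conversely when $q\equiv -1\pmod 6$ the polynomial $x^3$ is a permutation polynomial, so $x^3\sim_\cS x$ with $R=x$, $T=x$ (indeed $x^3=R\circ x\circ T$ fails as a composition identity, so more precisely one writes $x = (x^{3})^{-1}\circ x^3 \circ x$ where $(x^3)^{-1}$ is the compositional inverse permutation polynomial). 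For $x^3-x$ and $x^3-kx$ one shows they are never permutation polynomials (in characteristic $>3$ they always have a repeated value since their derivative $3x^2-1$ resp. $3x^2-k$ has a root in $\F_q$ for at least one of the two, and in fact both have nontrivial fiber structure), so neither is $\sim_\cS x$ for any $q$.

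The main obstacle I anticipate is the case analysis for $x^3-x$ and $x^3-kx$: the invariants $f(\cdot,2)$ and the fiber-size distribution $g(\cdot,n)$ for these depend subtly on whether the critical values $\pm\gamma_0$ lie in the image over $\F_q$ and on the splitting behaviour of the associated quadratics, which in turn depends on $q\bmod{12}$ and on Legendre symbols like $\left(\tfrac{-3}{q}\right)$. I would organize this by first computing, for $P=x^3+ax$ with $a\in\{-1,-k\}$, the critical points as the roots of $3x^2+a$, then the critical values, and then tabulating $(f(P,2),g(P,1),g(P,2),g(P,3))$ in each residue class; the point to extract is simply that in every case at least one of these invariants differs from the corresponding invariant of $x$ and of $x^2$, for all $q$ except $q=5$ in the $x^3-kx$ versus $x^2$ comparison. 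A secondary obstacle is confirming the exceptions are genuine equivalences and not merely non-obstructed: for $q\equiv-1\pmod6$ this is immediate since $x^3$ is then a bijection of $\F_q$ so $x^3\sim_\cS x$ directly from the definition (take $T$ to be the inverse permutation polynomial of $x^3$, $R=\mathrm{id}$, giving $R\circ x\circ T = x^3$... rather, $x = \mathrm{id}\circ x^3\circ T$); for $q=5$ I would simply exhibit the explicit permutation polynomials, using that two polynomials inducing fiber partitions of the same shape on a set of size $5$ — here both give the partition $\{2,2,1\}$ — can be conjugated into each other by permutations, which is the finite-set statement underlying $\sim_\cS$. I would close the section by remarking that these computations also re-prove, via $q=p$ and the planar characterization, that no cubic over $\F_p$ is Sidon except in the degenerate cases covered by Theorem~\ref{thm:cub12} together with Corollary~\ref{thm:Monomial}.
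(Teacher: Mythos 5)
Your overall route is essentially the paper's: separate the classes with the invariants of Theorem~\ref{thm:MS}, characterize $\sim_\cS x$ as ``is a permutation polynomial,'' and settle the $q=5$ exception by an explicit computation. Two of your choices are actually cleaner than the paper's. First, the uniform bound $g(P,2)\leq f(P,2)\leq 2$ for \emph{every} cubic $P$ (a cubic with exactly two distinct roots in $\F_q$ must have a repeated root, and there are at most two critical values) kills all three comparisons with $x^2$ at once for $q\geq 7$, where the paper argues class by class. Second, your observation that two maps $\F_q\to\F_q$ are $\sim_\cS$-equivalent as soon as their fiber-size partitions agree is correct and gives the $q=5$ equivalence $x^3-kx\sim_\cS x^2$ (both have type $\{2,2,1\}$) more conceptually than the paper's explicit identity $(x^3-3x,\,x^3-2x^2+3x)=(2x^2,x)$; it also shows that the family $g(\cdot,n)$ is a complete invariant for $\sim_\cS$, which the paper never states.

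There are two genuine weak points. The substantive gap is your claim that $x^3-kx$ is never a permutation polynomial: the derivative argument only works when $3x^2-k$ has a root in $\F_q$, i.e.\ when $k/3$ is a square; when $3$ is a square in $\F_q$ (so $k/3$ is a nonsquare for nonsquare $k$) the derivative has no rational root and ``both have nontrivial fiber structure'' is an assertion, not a proof. You need either the completeness of the Lidl--Niederreiter table of normalized degree-$3$ permutation polynomials (the paper's route), or a direct collision count: $x^2+xy+y^2=k$ has $q-\eta(-3)\geq q-1$ solutions by Proposition~\ref{lem:aabb}, of which at most two have $x=y$, so for $q\geq 5$ there is always a collision $P(x)=P(y)$ with $x\neq y$. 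The second issue is your reliance on $f(\cdot,3)$ to separate $x^3$ from $x^2$: your own data already contradicts the $f$-part of Theorem~\ref{thm:MS}, since $x\sim_\cS x^3$ for $q\equiv -1\pmod 6$ while $f(x,3)=0\neq 1=f(x^3,3)$ (reduction modulo $x^q-x$ destroys multiplicities, so $f$ is not in fact preserved). The conclusion $x^3\not\sim_\cS x^2$ is safe because your $g(\cdot,2)$ bound already covers it, but the $f$-based justification should be dropped.
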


After classifying all classes of cubic polynomials over $\F_q[x]$, we now apply the criteria of Sidon polynomials that we have to the three polynomials. We quickly see that the following statement follows directly  from Theorem~\ref{thm:Monomial} and Theorem~\ref{thm:cub12}.

\begin{corollary}The polynomial $P(x)=x^3\in \F_q[x]$ is a Sidon polynomial over $\F_q\times \F_q$ if and only if $q \equiv -1\pmod{6}$.
\end{corollary}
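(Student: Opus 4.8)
The plan is to reduce the congruence $q \equiv -1 \pmod 6$ to a statement purely modulo $3$, and then to invoke the two cited results: Corollary~\ref{thm:Monomial} for the ``only if'' direction, and Theorem~\ref{thm:cub12} (together with Theorem~\ref{thm:eqv}) for the ``if'' direction.

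First I would record the arithmetic translation. Since we are in the regime $\characteristic(\F_q) > 3$, the integer $q$ is odd and coprime to $3$, so $q \equiv \pm 1 \pmod 6$. Moreover $q \equiv -1 \pmod 6$ holds if and only if $q \equiv 2 \pmod 3$, i.e. $\gcd(3,q-1) = 1$; and $q \equiv 1 \pmod 6$ holds if and only if $q \equiv 1 \pmod 3$, i.e. $3 \mid q - 1$. This splits the biconditional cleanly into the two cases $3 \mid q-1$ and $\gcd(3,q-1) = 1$.

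For the forward implication I would argue by contraposition: if $q \not\equiv -1 \pmod 6$, then $3 \mid q - 1$ by the translation above, and since $3 > 2$, Corollary~\ref{thm:Monomial}(i) applies directly to give that $x^3$ is not a Sidon polynomial over $\F_q \times \F_q$. For the reverse implication, assume $q \equiv -1 \pmod 6$, so $\gcd(3,q-1) = 1$; one may now conclude in either of two equivalent ways. Corollary~\ref{thm:Monomial}(ii) applies verbatim and yields that $x^3$ is Sidon; alternatively, Theorem~\ref{thm:cub12}(1) gives $x^3 \sim_\cS x$, and since $x$ is trivially Sidon, Theorem~\ref{thm:eqv} transfers the Sidon property to $x^3$.

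There is no substantial obstacle here; the only points requiring care are bookkeeping ones. One should check that the exceptional case $x^3 \sim_\cS x^2$ at $q = 5$ appearing in Theorem~\ref{thm:cub12} is harmless, since $5 \equiv -1 \pmod 6$ already lies in the ``Sidon'' regime and so produces no conflict, and one should note that the standing hypothesis $\characteristic(\F_q) > 3$ is genuinely needed: for $q = 3$ the polynomial $x^3$ reduces to $x$ modulo $x^q - x$ and is therefore Sidon, even though $3 \not\equiv -1 \pmod 6$.
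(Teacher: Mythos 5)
Your proposal is correct and follows exactly the route the paper indicates (the paper gives no written proof, stating only that the corollary ``follows directly from Theorem~\ref{thm:Monomial} and Theorem~\ref{thm:cub12}''): the congruence $q\equiv -1\pmod 6$ is equivalent, under the standing hypothesis $\characteristic(\F_q)>3$, to $\gcd(3,q-1)=1$, after which parts (i) and (ii) of Corollary~\ref{thm:Monomial} give the two directions. Your side remarks about the $q=5$ exception (which concerns $x^3-kx$, not $x^3$, and is irrelevant here) and the necessity of $\characteristic(\F_q)>3$ are accurate bookkeeping.
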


For other classes, we have the following classifications when $q=p$ is a prime. The proof is given in Section \ref{sec:Proof}.
\begin{theorem}
\label{thm:NonSidon} Any polynomials that are equivalent to one of these polynomials are not Sidon polynomials in $\F_p\times \F_p$:\begin{enumerate}[label=(\roman{*})]
\item $P(x)=x^3-kx$, if $k$ is a nonsquare in $\F_p$ and $12\mid p+1$.
\item $P(x)=x^3-x$, if $12 \nmid p-1$.
\end{enumerate}
\end{theorem}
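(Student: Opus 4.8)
The plan is to apply the criteria from Theorem~\ref{thm:Criteria} and Theorem~\ref{thm:Criterion} to the cubic polynomials $P(x)=x^3-kx$ and $P(x)=x^3-x$, exploiting the fact that the fibers of these maps are controlled by the number of $\F_p$-points on the elliptic curve $y^2 = x^3 - kx$ (or its cubic twists), together with the multiplication-by-invariant property (Theorem~\ref{thm:eqv}) so that it suffices to treat a single representative in each class. First I would set up, for each target value $\gamma \in \F_p$, a description of $v_\gamma(P) = |\{x : P(x) = \gamma\}|$. Since $P$ is a degree-$3$ polynomial, $v_\gamma \in \{0,1,2,3\}$; the value $v_\gamma = 3$ occurs precisely when $P(x)-\gamma$ splits completely, $v_\gamma = 1$ when it has one $\F_p$-root and an irreducible quadratic factor, and $v_\gamma = 2$ occurs only at $\gamma$ for which $P(x)-\gamma$ has a double root, i.e.\ at the (at most two) critical values of $P$. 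The congruence conditions on $p$ (namely $12 \mid p+1$ in case (i) and $12 \nmid p-1$ in case (ii)) are exactly what is needed to force a favorable count of the fully-split fibers.

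The key computation is to evaluate $\sum_{i \in \F_p} v_i v_{i+r}$ (or the refined quantity $v_{2^{-1}r} + \sum_i v_i v_{r-i}$ from Theorem~\ref{thm:Criterion}) for a well-chosen $r$ and show it exceeds the allowed bound ($2p-1$ or $q$ when $r \neq 0$, and $2q$ in Theorem~\ref{thm:Criterion}). The natural choice is $r=0$, so that one is estimating $d_0(P) = \sum_i v_i^2 = |\{(a,b) : P(a) = P(b)\}|$. Writing $N_j = |\{\gamma : v_\gamma = j\}|$, one has $N_1 + 2N_2 + 3N_3 = p$ (counting $\F_p$ with multiplicity through $P$) and $d_0(P) = N_1 + 4N_2 + 9N_3$; combining these, $d_0(P) = p + 2N_2 + 6N_3$, so the violation $d_0(P) > 2q$ reduces to showing $2N_2 + 6N_3 \geq p+1$, i.e.\ $N_3 \geq (p+1 - 2N_2)/6$. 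Since $N_2 \leq 2$, it suffices to produce enough completely-split fibers: roughly $N_3 \gtrsim p/6$. This is where the elliptic-curve input enters: the number of $\gamma$ with $v_\gamma(x^3 - kx - \gamma) = 3$ is governed by how the three roots of the depressed cubic sit in $\F_p$, which in turn is a character-sum / point-counting question on the curve $E_k : y^2 = x^3 - kx$. Under the stated congruences on $p$, the relevant curves are supersingular (the condition $p \equiv 3 \pmod 4$ is contained in $12 \mid p+1$, making $y^2 = x^3-kx$ supersingular with $|E_k(\F_p)| = p+1$), and one extracts from $|E_k(\F_p)| = p+1$ a clean formula for $N_3$ — essentially $N_3 = (p - \text{(boundary terms)})/6$ up to small corrections from the $2$-torsion and the point at infinity — which gives the needed lower bound. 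Case (ii), $P = x^3 - x$ with $12 \nmid p-1$, is handled the same way: the curve $y^2 = x^3 - x$ is supersingular when $p \equiv 3 \pmod 4$, and the extra arithmetic condition $12 \nmid p-1$ rules out the one residue class where the count of split fibers would be too small, so again $d_0(P) > 2p$.

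I expect the main obstacle to be the precise bookkeeping that converts the elliptic-curve point count into an exact (or sharp enough) value of $N_3$, and verifying that the stated congruence hypotheses are exactly the ones under which the bound is violated — in particular handling the borderline contributions from ramification points of $P$ (the critical values, contributing to $N_2$), from $\gamma$ such that $P(x)-\gamma$ has a root that is also a branch point, and from the point at infinity on $E_k$. A secondary subtlety is that in case (i) the hypothesis involves a nonsquare $k$, so one must check the argument is insensitive to the choice of $k$ among nonsquares — this follows because any two such polynomials are Sidon-equivalent (replace $x$ by $cx$ for suitable $c$), so Theorem~\ref{thm:eqv} reduces it to one representative, but one should confirm the quadratic-twist bookkeeping is consistent. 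If the $r=0$ estimate turns out to be just barely insufficient in some residue class, the fallback is to invoke the sharper inequality of Theorem~\ref{thm:Criterion} with a carefully chosen nonzero $r$, trading the cruder bound $2q-1$ for $2q$ but gaining the extra term $v_{2^{-1}r}$.
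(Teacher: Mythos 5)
Your high-level strategy is the same as the paper's: apply Theorem~\ref{thm:Criteria} with $r=0$ to bound $d_0(P)=\sum_i v_i^2$, and fall back on Theorem~\ref{thm:Criterion} (the extra $v_{2^{-1}r}$ term with $r=0$, which equals $v_0=3$ for $x^3-x$) in the borderline case~(ii). Your identity $d_0(P)=p+2N_2+6N_3$ and the bound $N_2\le 2$ are also correct, and you rightly note that the whole proof reduces to computing the number of completely split fibers. However, the step where you actually compute this quantity has a genuine gap: the elliptic curve $E_k\colon y^2=x^3-kx$ is not the relevant object. Its point count satisfies $|E_k(\F_p)|=p+1+\sum_x\eta(x^3-kx)$, which measures how often $P(x)$ is a \emph{square} in $\F_p$ --- a completely different statistic from how often the fiber $P^{-1}(\gamma)$ has three points --- so supersingularity of $E_k$ (i.e.\ $p\equiv 3\pmod 4$) yields no formula for $N_3$. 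Indeed, the splitting behaviour of $x^3-kx-\gamma$ is governed by its discriminant $4k^3-27\gamma^2$ and the associated genus-zero covers, so the answer is controlled by $p\bmod 3$ (through $\eta(-3)$) and by $\eta(k/3)$ (through $p\bmod{12}$), not by $p\bmod 4$; your proposal therefore cannot recover the precise congruence conditions in the statement.

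The paper's computation is far more elementary and you should be able to repair your argument along the same lines: factor
\begin{align*}
P(a)-P(b)=(a-b)\bigl(a^2+ab+b^2-c\bigr),
\end{align*}
so that $d_0(P)=p+|\{(a,b)\colon a\ne b,\ a^2+ab+b^2=c\}|$. The conic $a^2+ab+b^2=c$ has exactly $p-\eta(-3)$ points (a standard quadratic-form count), and the diagonal contribution $a=b$ amounts to solving $3a^2=c$, which has $0$ or $2$ solutions according to $\eta(c/3)$. Combining these gives the exact values $d_0(P)\in\{2p-3,\,2p-1,\,2p+1\}$ in the four cases determined by $p\bmod 6$ and $\eta(c/3)$, from which case~(i) follows directly from Theorem~\ref{thm:Criteria} and case~(ii) from Theorem~\ref{thm:Criterion}. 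Your reduction to a single representative in each equivalence class via Theorem~\ref{thm:eqv} is fine and matches the paper, but as it stands the central quantitative input of your proof is missing.
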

The other cases are still open, as per the authors' knowledge. However, the authors believe further criteria and techniques are needed in order to fully solve this problem.
\section{Proof of Theorem~\ref{thm:MS}}\label{sec:25}

\subsection{The invariant $f$}We first prove that $f(P,n)=f(R\circ P\circ T,n)$ for any positive integer $n$ and permutation polynomials $R,T\in \F_q[x]$. For this purpose, we define $$\Fcal_n(Q)=\{\alpha\in \F_q\colon Q(x)-\alpha\text{ has a root of multiplicity at least }n\text{)}\},$$ for a $Q\in \F_q[x]$.  

By definition, we have $|\Fcal_n(Q)|=f(Q,n)$. Hence, in order to prove the original statement, it is sufficient to exhibit a bijection between $\Fcal_n(P)$ and $\Fcal_n(R\circ P\circ T)$. We now prove that the map $\phi(\alpha) = R(\alpha)$ for all $\alpha\in \Fcal_n(Q)$ satisfies this criterion.

First, we observe that since $R$ is a permutation polynomial, the map $\phi$ is a bijection between $\Fcal_n(P)$ and $\phi(\Fcal_n(P))$. It remains to prove that the range of $\phi$ is $\Fcal_n(R\circ P\circ T)$.

We now prove that $R(\alpha)\in \Fcal_n(R\circ P\circ T)$ for each $\alpha\in \Fcal_n(P)$. We first prove that $\alpha\in \Fcal_n(P\circ T)$. Since $\alpha\in \Fcal_n(P)$, there exist $\gamma \in \F_q$ and $Q_1\in \F_q[x]$ with \begin{align*}
P(x)-\alpha=(x-\gamma)^nQ_1(x).
\end{align*}
Substituting $x\to T(x)$, we get \begin{align*}
P(T(x))-\alpha=(T(x)-\gamma)^nQ_1(T(x)).
\end{align*} 
Since $T$ is a permutation polynomial, we see that $T^{-1}(\gamma)$ is a root of $T(x)-\gamma$. Hence, $x-T^{-1}(\gamma)\mid T(x)-\gamma$, and $$(x-T^{-1}(\gamma))^n\mid P(T(x))-\alpha.$$ This implies $T^{-1}(\gamma)$ is a root of $P(T(x))-\alpha$ with order at least $n$, which completes the proof.

We now prove that $R(\alpha)\in \Fcal_n(R\circ P\circ T)$. We observe that $$P(T(x))-\alpha\mid R(P(T(x)))-R(\alpha).$$
Hence, by the relations above, $R(P(T(x)))-R(\alpha)$ has a root of multiplicity at least $n$ as well (namely, $T^{-1}(\gamma)$). This proves the initial assertion, and hence the original statement.

\subsection{The invariant $g$}
We prove that $g(P,n)=g(R\circ P\circ T,n)$ for any positive integer $n$ and permutation polynomials $R,T\in \F_q[x]$. For this purpose, we define  $$\Gcal_n(Q)=\{\alpha \in \F_q \colon Q(x)=\alpha\text{ has exactly }n\text{ distinct  solutions in }\F_q\}$$ for a $Q\in \F_q[x]$.

By definition, we have $|\Gcal_n(Q)|=g(Q,n)$. Hence, in order to prove the original statement, it is sufficient to exhibit a bijection between $\Gcal_n(P)$ and $\Gcal_n(R\circ P\circ T)$.
We now prove that the map $\psi(\alpha) = R(\alpha)$ for all $\alpha\in \Gcal_n(Q)$ satisfies this criterion.

We first observe that since $R$ is a permutation polynomial, the map $\psi$ is a bijection between $\Gcal_n(P)$ and $\psi(\Gcal_n(P))$. It remains to prove that the range of $\psi$ is $\Gcal_n(R\circ P\circ T)$.

We now prove that $R(\alpha)\in \Gcal_n(R\circ P\circ T)$ for each $\alpha\in \Gcal_n(P)$. Because $\alpha\in \Gcal_n(P)$, the equation $P(x)=\alpha$ has exactly $n$ different solutions in $\F_q$. Because $T$ is a permutation polynomial, the equation $$P(T(y))=\alpha$$ also has exactly $n$ solutions in $\F_q$ (by taking $y=T^{-1}(x)$).

Then, because $R$ is a permutation polynomial, the equation $$R(P(T(y)))=R(\alpha)$$ has exactly $n$ solutions in $\F_q$. Hence, $R(\alpha)\in \Gcal_n(R\circ P\circ T)$, which completes the proof of the initial assertion.

\section{Proof of Theorem~\ref{thm:Criterion}}\label{sec:33}

For an additive set $H$, we define $$H+H=\{h+h'\colon h,h'\in H\}.$$ Firstly, we state the following lemma.

\begin{lemma}\label{lem:h+h}
Let $H$ be a Sidon set. If $s\in H+H$, the equation $x+y=s$ has exactly one solution in $H$ if and only if $s=2h$, for an $h\in H$. Otherwise, it has exactly two solutions in $H$.
\end{lemma}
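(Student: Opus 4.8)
The plan is to prove Lemma~\ref{lem:h+h} by a direct case analysis built on the defining property of a Sidon set, exploiting the symmetry of the equation $x+y=s$.

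First I would observe that if $s\in H+H$, then by definition there exist $h_1,h_2\in H$ with $h_1+h_2=s$. The key point is that $(h_1,h_2)$ and $(h_2,h_1)$ are both solutions in $H\times H$ of $x+y=s$; these coincide precisely when $h_1=h_2$, i.e.\ when $s=2h_1$. So there is always at least one solution, and at least two whenever $s$ is not of the form $2h$. It remains to show that there are never three or more solutions, and that when $s=2h$ the representation $s=h+h$ is the \emph{only} solution.

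For the upper bound, suppose $x_1+y_1=s=x_2+y_2$ with $x_i,y_i\in H$ and the pairs $(x_1,y_1)\neq(x_2,y_2)$. Rearranging gives $x_1-x_2=y_2-y_1$, which is exactly a relation of the shape $a_1-a_2=a_3-a_4$ with $a_1=x_1$, $a_2=x_2$, $a_3=y_2$, $a_4=y_1$, all lying in $H$. The Sidon property then forces $\{x_1,y_1\}=\{x_2,y_2\}$. Since the pairs are distinct ordered pairs but equal as sets, the only possibility is $x_1=y_2$ and $y_1=x_2$; that is, $(x_2,y_2)=(y_1,x_1)$ is the reversal of $(x_1,y_1)$. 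Hence any two distinct solutions of $x+y=s$ are reverses of one another, so there are at most two solutions in total, and exactly two forces $x_1\neq y_1$. Conversely, if $s=2h$ for some $h\in H$, the same argument applied to a putative second solution $(x,y)\neq(h,h)$ would give $(x,y)=(h,h)$, a contradiction; so the solution is unique in that case. Combining these observations yields the dichotomy in the statement.

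I do not expect a serious obstacle here: the whole argument is just unwinding the Sidon condition together with the trivial symmetry $x\leftrightarrow y$. The only place to be slightly careful is the bookkeeping between \emph{ordered} pairs (solutions of $x+y=s$) and \emph{unordered} pairs (the sets $\{a_1,a_4\}=\{a_2,a_3\}$ appearing in the definition of a Sidon set), and in particular handling the degenerate sub-case where some $x_i$ equals some $y_j$ so that the two ``reversed'' solutions actually collapse to one; that is exactly the $s=2h$ branch. Once that correspondence is set up cleanly, the proof is a few lines.
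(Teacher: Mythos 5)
Your proof is correct and follows essentially the same route as the paper: take two representations $x_1+y_1=s=x_2+y_2$, rewrite as $x_1-x_2=y_2-y_1$, and invoke the Sidon condition to force the second solution to be the reversal of the first, with the $s=2h$ case being exactly the degenerate collapse. No gap; the bookkeeping between ordered and unordered pairs that you flag is handled the same way in the paper's own argument.
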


\begin{proof}
Since $s\in H+H$ there exist $(a,b)\in H\times H$ such that $a+b=s$. Suppose $(c,d)\in H\times H$ such that $c+d=s$ as well. Then $a-c=d-b$. Since $H$ is Sidon, then $(a,b)=(c,d)$ or $(a,b)=(d,c)$.  If $a=b$ then $(a,a)$ is the only solution and if $a\neq b$ there are exactly two solutions, namely $(a,b)$ and $(b,a)$.
\end{proof}

Now, let $P\in \F_q[x]$ be a Sidon polynomial and  $H=(P,Q)$ be a Sidon set.
First, we prove that, for a fixed $r\in \F_q$
\begin{align*}
|\{(a,b)\in \F_q\times \F_q \colon P(a)+P(b)=r\}|=\sum_{i\in \F_q}v_iv_{r-i}.
\end{align*} This is done by considering that, for an arbitrary $i\in \F_q$, there are $v_iv_{r-i}$ ways of choosing $a,b\in \F_q$ with $P(a)=i$ and $P(b)=r-i$.

Consider a map $\F_q\times \F_q\to H+ H$, defined as $$(x,y)\mapsto (P(x)+P(y),Q(x)+Q(y)).$$ By Lemma~\ref{lem:h+h}, this map is two-to-one, except when $x=y$.

Now, for a fixed $r\in \F_q$, we consider an element $(r,w)\in H+H$. By definition, there exist $a,b\in \F_q$ that satisfy the system of equations \begin{align*}P(a)+P(b)&=r\\
	Q(a)+Q(b)&=w.
\end{align*}

By the observation in the preceding paragraph, and dividing the cases where $a\neq b$ and $a=b$, we get that there are exactly  \begin{align*}
\dfrac{1}{2} |\{(a,b)\in \F_q\times \F_q, a\neq b \colon P(a)+P(b)=r\}|+v_{2^{-1}r}
\end{align*} elements of the form $(r,w)$ in $H+H$. Since there are at most $q$ distinct element of this form in $H+H$, we get that
\begin{align*} \dfrac{1}{2} |\{(a,b)\in \F_q\times \F_q, a\neq b \colon P(a)+P(b)=r\}|+v_{2^{-1}r}&\leq q.
\end{align*}
On the other hand, \begin{align*}
\sum_{i\in \F_q}v_iv_{r-i}&=|\{(a,b)\in \F_q\times \F_q \colon P(a)+P(b)=r\}|\\
&=|\{(a,b)\in \F_q\times \F_q, a\neq b \colon P(a)+P(b)=r\}|+v_{2^{-1}r}.
\end{align*}
From these two statements, we may get
\begin{align*}
v_{2^{-1}r} + \sum_{i\in \F_q}v_iv_{r-i}\leq 2q,
\end{align*} which proves the theorem.


\section{Proof of Theorem~\ref{thm:cub}}\label{sec:35}
We first prove that for any cubic $P\in \F_q[x]$, there exists a $w\in \F_q$ with $$P\sim_\cS x^3-wx.$$ Let $P=ax^3+bx^2+cx+d$, and $ba^{-1}=e$, $ca^{-1}=f$. We get
\begin{align*}
P&\sim_\cS x^3+ba^{-1}x^2+ca^{-1}x+da^{-1}\\
&\sim_\cS x^3+ex^2+fx \sim_\cS \left(x-\dfrac{e}{3}\right)^3+e\left(x-\dfrac{e}{3}\right)^2+f\left(x-\dfrac{e}{3}\right)\\
&\sim_\cS x^3-\left(f-\dfrac{e^2}{3}\right)x=x^3-wx,
\end{align*} which proves the statement.

Next, we prove that \begin{align*}
	x^3-k_1x \sim_\cS x^3-k_2x,
\end{align*}where $k_1, k_2\in \F_q$ are both nonzero squares in $\F_q$ or are both not squares in $\F_q$. Notice that in either cases, $k_1/k_2$ is a square in $\F_q$. Let $k_1/k_2=r^2$. Then, we have that \begin{align*}
x^3-k_1x=x^3-k_2r^2x\sim_\cS (rx)^3-k_2r^2rx=r^3(x^3-k_2x)\sim_\cS x^3-k_2x,
\end{align*} which proves the statement. 

By this point, we see that any cubic polynomial $P\in \F_q$ is equivalent to either $x^3$, $x^3-x$, or $x^3-kx$, for a fixed nonsquare $k\in \F_q$. Now, we prove that these three polynomials are not equivalent over $\sim_\cS$.

We first prove that $x^3$ is equivalent to neither $x^3-x$ nor $x^3-kx$, with respect to $\sim_S$. By Theorem~\ref{thm:MS}, it suffices to prove $f(x^3,3)\neq f(x^3-x,3)$ and $f(x^3,3)\neq f(x^3-kx,3)$.

We first prove that $$f(x^3-x,3)=f(x^3-kx,3)=0.$$ 

To do this, we first need to proof that there is no $\alpha \in \F_q$ such that $x^3-x-\alpha$ has a triple root. Suppose otherwise; then $x^3-x-\alpha=(x-\gamma)^3$ for a $\gamma$. By equating the $x^2$ and $x$ coefficient, we get that $c=0$. However, the polynomial $x^3-x$ has no triple root. Hence, the assumption is false and  $f(x^3-x,3)=0$. The same reasoning also implies that there are no $\alpha\in \F_q$ such that $x^3-kx-\alpha$ has a triple root, which completes the proof of the equation.

On the other hand, we see that $$f(x^3,3)>0,$$ since the polynomial $x^3$ has $0$ as a triple root. This completes the proof of the initial statement.

Now we prove $x^3-x$ and $x^3-kx$ are not equivalent over $\sim_S$. By Theorem~\ref{thm:MS}, it is sufficient to prove $$f(x^3-x,2)\neq f(x^3-kx,2).$$

Because $f(x^3-x,3)=0$, we see that $f(x^3-x,2)>0$ if and only if there exists an $\alpha\in \F_q$ such that the polynomial $x^3-x-\alpha$ has a double root  $\gamma \in \F_q$.
By differentiating, this is possible if and only if $3\gamma^2-1=0$, which implies $1/3$ is a square in $\F_q$. By the same reasoning, $f(x^3-kx,2)>0$ if and only if $k/3$ is a square. 

Since $k$ is a nonsquare, $1/3$ and $k/3$ cannot both be squares in $\F_q$. Hence, when $f(x^3-x,2)>0$, $f(x^3-kx,2)=0$ and vice versa. This completes the proof.

\section{Proof of Theorem~\ref{thm:cub12}}\label{sec:36}
We first prove that $x^3$ is equivalent to neither $x^2$ nor $x$. By Theorem~\ref{thm:Monomial}, we see that $x^3$ is not Sidon if $6|q-1$, and $x^3$ is a permutation polynomial if $6|q+1$. This completes the proof in this case.

Next, we prove that $x^3-x$ is  equivalent  to neither $x$ nor $x^2$. From Theorem~\ref{thm:MS}, it is sufficient to prove $g(x^3-x,3)$ is equal to neither $g(x,3)$ nor $g(x^2,3)$. We see that $$g(x,3)=g(x^2,3)=0.$$ However, since $x^3-x=0$ has three roots in $\F_q$, we see that $$g(x^3-x,3)>0.$$ This proves the statement.

Now we prove that $x^3-kx$ (where $k$ is a nonsquare) is not equivalent to $x$. Using the normalized permutation polynomial table in \cite[Table~(7.1)]{lidl}, we see that the only  possible normalized permutation polynomials with degree 3 on this case is $x^3$ (if $q\equiv 1\pmod{6})$. However, since we already see that $x^3\not \sim_\cS x^3-kx$, it is not possible for $x^3-kx$ to be a permutation polynomial. Hence, $x^3-kx \not \sim_\cS x$.

Lastly, we prove that $x^3-kx$ is not equivalent to $x^2$ when $q\neq 5$. In order to do this, we compare $g(x^2,2)$ and $g(x^3-kx,2)$. First, since there are exactly $\dfrac{q-1}{2}$ nonzero squares in $\F_q$, we see that $$g(x^2,2)=\dfrac{q-1}{2}.$$

We now calculate $g(x^3-kx,2)$. We notice that the polynomial $x^3-kx-\alpha$ has exactly two roots in $\F_q$ if and only if this polynomial has double root.  Suppose that the root is $\gamma \in \F_q$. Hence, $$3\gamma^2-k=0.$$ This equation has exactly zero or two solutions over $\F_q$. Since each solution corresponds to a different $\alpha$, we see that $$g(x^3-kx,2)\leq 2.$$

Now suppose that $x^2\sim_\cS x^3-kx$. Then, $g(x^2,2)=g(x^3-kx,2)$. Hence, $$\dfrac{q-1}{2}\leq 2.$$ This implies that $q=5$, since $\characteristic(\F_q)>3$.

It remains to prove that  $x^2\sim_\cS x^3-kx$ in $\F_5$. To do this,  we observe that \begin{align*}
(x^3-3x,x^3-2x^2+3x)=\{(0,0),(3,2),(2,1),(3,3),(2,4)\}=(2x^2,x).
\end{align*} We may see that $(2x^2,x)$ is a maximum Sidon set in $\F_5\times \F_5$ from Lemma~\ref{lem:OneorTwo}. Since $3$ is not a square in $\F_5$, this completes the proof of the theorem.

\section{Proof of Theorem~\ref{thm:NonSidon}}\label{sec:Proof}

By Theorem~\ref{thm:Criteria}, we may count the solution of $P(x)=P(y)$ in $\F_p$ to determine whether a polynomial is Sidon over $\F_p\times \F_p$. In the case of $P=x^3-cx$, the equation $P(x)=P(y)$ is equivalent to \begin{align*}
x=y \text{ or } x^2+xy+y^2=c.
\end{align*}

We now calculate the solutions of $x^2+xy+y^2=c$, where $c\in \F_p$ is nonzero.
\begin{prop}\label{lem:aabb}
Let $c\in \F_p$, $c\neq 0$. Then, \begin{align*}
|\{(a,b)\in \F_p\times \F_p \colon a^2+ab+b^2=c\}|=\begin{cases}
p+1,\text{ if } p\equiv -1 {\pmod 6}\\
p-1,\text{ if } p\equiv 1 {\pmod 6}.
\end{cases}
\end{align*}
\end{prop}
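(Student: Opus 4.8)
The plan is to count the solutions of $a^2+ab+b^2 = c$ directly by completing the square, reducing the problem to counting points on a conic over $\F_p$. First I would multiply through by $4$ (legitimate since $p > 3$) to rewrite $a^2+ab+b^2 = c$ as $(2a+b)^2 + 3b^2 = 4c$. Substituting $u = 2a+b$, $v = b$ gives an invertible linear change of variables over $\F_p$, so the count equals $N := |\{(u,v)\in\F_p^2 : u^2 + 3v^2 = 4c\}|$. Since $c\neq 0$, the right-hand side $4c$ is a nonzero element of $\F_p$.

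Next I would invoke the standard formula for the number of solutions of $u^2 + D v^2 = m$ with $m\neq 0$: this count is $p - \left(\tfrac{-D}{p}\right)$, where $\left(\tfrac{\cdot}{p}\right)$ is the Legendre symbol. (This is classical — it follows, e.g., from evaluating a Jacobi/Gauss sum, or from the fact that the affine conic $u^2+Dv^2 = m$ has $p - \chi(-D)$ points when $m\neq 0$; one can also derive it by writing $u^2 + Dv^2 = m$ as a norm form from $\F_p(\sqrt{-D})$ when $-D$ is a nonsquare, or as a product of split linear forms when $-D$ is a square.) Here $D = 3$, so the count is $p - \left(\tfrac{-3}{p}\right)$.

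Finally I would identify $\left(\tfrac{-3}{p}\right)$ in terms of $p \bmod 6$. By quadratic reciprocity (or the classical fact that $-3$ is a QR mod $p$ iff $p \equiv 1 \pmod 3$), one has $\left(\tfrac{-3}{p}\right) = 1$ when $p\equiv 1\pmod 3$ and $\left(\tfrac{-3}{p}\right) = -1$ when $p \equiv 2\pmod 3$. Since $p > 3$ is odd, $p\equiv 1\pmod 3$ is equivalent to $p \equiv 1\pmod 6$ and $p\equiv 2\pmod 3$ to $p\equiv -1\pmod 6$. Thus the count is $p - 1$ when $p\equiv 1\pmod 6$ and $p+1$ when $p\equiv -1\pmod 6$, which is exactly the claim.

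The only real obstacle is citing or proving the conic point-count formula cleanly; everything else is bookkeeping. If a self-contained argument is preferred over a citation, I would spell out the two cases: when $-3$ is a nonsquare, $u^2+3v^2$ is a norm form for the field extension $\F_p(\sqrt{-3})/\F_p$, whose norm map is surjective onto $\F_p^\times$ with all fibers of size $p+1$ (the kernel being the norm-one subgroup of order $p+1$), giving $p+1$; when $-3$ is a nonzero square, say $-3 = t^2$, the form factors as $(u-tv)(u+tv) = 4c$, and counting solutions of $XY = 4c$ with the invertible substitution $X = u-tv$, $Y = u+tv$ gives $p-1$. This matches the Legendre-symbol formula and completes the proof.
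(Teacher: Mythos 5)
Your proof is correct and follows essentially the same route as the paper: both reduce the problem to the standard point count $p-\eta(-3)$ for a nondegenerate binary quadratic form representing a nonzero value (the paper by citing Theorem 6.26 of Lidl--Niederreiter with form determinant $2^{-2}\cdot 3$, you by completing the square to $u^2+3v^2=4c$ and invoking the conic count), and then both evaluate $\eta(-3)$ via $p \bmod 3$. Your optional self-contained justification of the conic count (norm form of $\F_{p^2}$ when $-3$ is a nonsquare, split factorization $XY=4c$ when it is a square) is also correct and would make the proposition independent of the citation.
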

\begin{proof}
We apply Theorem 6.26 in \cite{lidl}.
Let $h(a,b)=a^2+ab+b^2$.

We notice that $
    h(a,b)=\begin{pmatrix}a&b\end{pmatrix} \begin{pmatrix}1&2^{-1}\\2^{-1}&1\end{pmatrix}\begin{pmatrix}a\\b\end{pmatrix}$ and $\det\left( \begin{pmatrix}1&2^{-1}\\2^{-1}&1\end{pmatrix} \right) = 2^{-2}\cdot 3.$

By Theorem 6.26 in \cite{lidl}, the number of solutions of $a^2+ab+b^2=c$ over $\F_p\times \F_p$, where $c\neq 0$, is \begin{align*}
    p-\eta(-2^{-2}\cdot 3)=p-\eta(-3),
\end{align*} where $\eta$ is the quadratic character on $\F_p$. This completes the proof.
\end{proof}

We now proceed to count the number of solutions of the equation $P(a)=P(b)$, where $P=x^3-cx\in \F_p[x]$. We recall that this quantity is denoted by $d_0(P)$.

\begin{prop}\label{thm:x3y3}
Let $P(x)=x^3-cx$ with $c\neq 0$. Then, \begin{align*}
d_0(P)=\begin{cases}
2p-3,& p\equiv 1 {\pmod 6},\:c/3\text{ is a square,}\\
2p-1,& p\equiv 1 {\pmod 6},\:c/3\text{ is a nonsquare,}\\
2p-1,& p\equiv -1 {\pmod 6},\:c/3\text{ is a square,}\\
2p+1,& p\equiv -1 {\pmod 6},\:c/3\text{ is a nonsquare.}\\
\end{cases}
\end{align*}
\end{prop}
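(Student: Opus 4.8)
The plan is to count $d_0(P)$ for $P = x^3 - cx$ by using the decomposition of the equation $P(a) = P(b)$ into the diagonal $a = b$ together with the conic $a^2 + ab + b^2 = c$, being careful about the overlap between these two cases. Concretely, I would write
\begin{align*}
d_0(P) = |\{(a,b) : a = b\}| + |\{(a,b) : a^2+ab+b^2 = c\}| - |\{(a,b) : a = b,\ a^2+ab+b^2 = c\}|.
\end{align*}
The first term is simply $p$. The second term is handled by Proposition~\ref{lem:aabb}: it equals $p+1$ when $p \equiv -1 \pmod 6$ and $p - 1$ when $p \equiv 1 \pmod 6$ (here we use $c \neq 0$). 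So the only remaining work is the correction term, i.e. counting diagonal points on the conic.

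The correction term asks for the number of $a \in \F_p$ with $3a^2 = c$, equivalently $a^2 = c/3$ (legitimate since $\characteristic(\F_p) > 3$). This equation has exactly $2$ solutions if $c/3$ is a nonzero square, and $0$ solutions if $c/3$ is a nonsquare (it cannot be zero since $c \neq 0$). Feeding these two values into the displayed identity and pairing them with the two residue classes of $p$ modulo $6$ gives exactly the four-case formula in the statement: for $p \equiv 1 \pmod 6$ we get $p + (p-1) - 2 = 2p - 3$ or $p + (p-1) - 0 = 2p-1$, and for $p \equiv -1 \pmod 6$ we get $p + (p+1) - 2 = 2p - 1$ or $p + (p+1) - 0 = 2p + 1$, matching the square/nonsquare split for $c/3$ respectively.

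The only genuine subtlety — and the step I would be most careful about — is verifying that a point $(a,a)$ lying on the conic $a^2+ab+b^2=c$ corresponds precisely to $a$ being a double root of $x^3 - cx - \alpha$ for the relevant $\alpha$, so that the count of such diagonal points is governed by the discriminant condition $3a^2 - c = 0$ rather than something subtler; this is exactly the same computation that appeared in the proof of Theorem~\ref{thm:cub12} when computing $g(x^3-kx,2)$, so it is routine. One should also double-check the edge behavior when $c/3$ happens to coincide with special values, but since $c \neq 0$ forces $c/3 \neq 0$, no degenerate case arises. Everything else is bookkeeping: substitute the two lemmas, subtract the overlap, and sort by the sign of $\eta(-3)$ (which controls $p \bmod 6$) and by whether $c/3$ is a square.
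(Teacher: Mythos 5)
Your proposal is correct and follows essentially the same route as the paper: factor $P(a)-P(b)=(a-b)(a^2+ab+b^2-c)$, apply Proposition~\ref{lem:aabb} to the conic, and correct for the overlap via $3a^2=c$, which has $2$ or $0$ solutions according as $c/3$ is a square or not. The ``subtlety'' you flag about double roots is not actually needed --- the overlap count is just the substitution $b=a$ into the conic --- but this does not affect the argument, and your case arithmetic matches the stated formula.
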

\begin{proof}By the algebraic manipulation done before, we only need to calculate the number of solutions of $$a^2+ab+b^2=c$$ over $\F_q \times \F_q$, with $a\neq b$.

Without the restriction $a\neq b$, we see that this is just Proposition~\ref{lem:aabb}. Now, we proceed to count the case $a=b$. We see that the equation $$3a^2=c$$ has exactly two solutions if and only if $c/3$ is a square; else, there are no solutions. Adding the $p$ solutions of $P(x)=P(y)$ where $x=y$, the proof can now be completed by careful case division.
\end{proof}

We recall that, from Theorem~\ref{thm:Criteria}, if $P$ is Sidon, $d_0(P)\leq 2p-1$. Hence, by Proposition~\ref{thm:x3y3},  we see that when $p\equiv -1 {\pmod 6}$ and $c/3$ is a nonsquare, $P(x)=x^3-cx$ is not a Sidon polynomial in $\F_p\times \F_p$. Now, notice that $3$ is a square in $\F_p$ if and only if $12\mid p\pm 1$. By using quadratic reciprocity, we get the following results: \begin{enumerate}[label=(\roman{*})]
    \item If $p\equiv -1 {\pmod {12}}$ and $k$ is a nonsquare in $\F_p$, $P(x)=x^3-kx$ is not Sidon.
    \item If $p\equiv 5 {\pmod {12}}$ and $c$ is a square in $\F_p$, $P(x)=x^3-cx$ is not Sidon.
\end{enumerate}These statements are equivalent to the first half of the  Theorem~\ref{thm:NonSidon}.

We now proceed to prove the second half of Theorem~\ref{thm:NonSidon}. Namely, we prove that  $$P(x)=x^3-x$$ is not a Sidon polynomial in $\F_p\times \F_p$ for $p\equiv -1 {\pmod {12}}$ or $p\equiv -5 {\pmod {12}}$. In order to do this, we use the criterion in Theorem~\ref{thm:Criterion}. We first notice that $v_{i}=v_{-i}$. Therefore, by substituting $r=0$ in the left-hand side of Theorem~\ref{thm:Criterion}, we have 
\begin{align*}
v_{2^{-1}\cdot 0} + \sum_{i\in \F_q}v_iv_{0-i}&=v_0+\sum_{i\in \F_q}v_i^2\\
&= v_0 + d_0(P)\\
&=d_0(P)+3,
\end{align*} where the last line can be seen because $x^3-x=0$ has three distinct solutions in $\F_p$.

We first let $p\equiv -1 {\pmod {12}}$. This implies that $p\equiv -1 \pmod{6}$. Since $1/3$ is a square in $\F_p$, we have that  \begin{align*}d_0(P)+3=2p+2
\end{align*} from Proposition~\ref{thm:x3y3}. By Theorem~\ref{thm:Criterion}, $P$ is not Sidon.

Next, let $p\equiv -5 {\pmod {12}}$.  This implies that $p\equiv 1 \pmod{6}$. Since $1/3$ is not a square in $\F_p$, we have that  \begin{align*}d_0(P)+3=2p+2
\end{align*} from Proposition~\ref{thm:x3y3}. Hence, $P$ is not Sidon from Theorem
\ref{thm:Criterion}. This completes the proof of the theorem.

\section*{Acknowledgements}
We would like to thank anonymous reviewers for constructive feedbacks, especially on giving additional references on planar polynomials and simplifying the proof of Lemma~\ref{lem:aabb}. This research is supported by PPMI ITB 2021.


\begin{thebibliography}{99}
\bibitem{allen} P. Allen, P. Keevash, B. Sudakov and J. Verstraete, Tur{\'a}n numbers of bipartite graphs plus an odd cycle, \textit{J. Combin. Theory Ser. B} 106 (2014), 134--162.

\bibitem{bergman} E. Bergman, R. S. Coulter, and I. Villa,
Classifying planar monomials over fields of order a prime cubed,
\textit{Finite Fields Their Appl.},
\textbf{78} (2022).

\bibitem{cand} P. Candela, J. Ru{\'e}, and O. Serra, Memorial to Javier Cilleruelo; a problem list, {\it INTEGERS}, 198 (2018).

\bibitem{cil} J. Cilleruelo, Combinatorial problems in finite fields and Sidon sets, {\it Combinatorica} 32 (2012), 497--511.

\bibitem{cilsa} J. Cilleruelo, I. Ruzsa, and C. Vinuesa, Generalized Sidon sets,
{\it Adv Math (N Y)} 225 (2010), 2786--2807.

\bibitem{CL} R. S. Coulter and F. Lazebnik,
On the classification of planar monomials over fields of square order,
\textit{Finite Fields Their Appl.} 18 (2012), 316-336.

\bibitem{CM} R. S. Coulter and R. W. Matthews,  Planar functions and planes of Lenz-Barlotti class II, \textit{Des. Codes Cryptogr.} 10 (1997), 167-184.

\bibitem{CMT} R. S. Coulter, R. W. Matthews, and C. Timmons, Planar polynomials and an extremal problem of Fischer and Matou{\v{s}}ek. \textit{J. Comb. Theory, Ser. B} 128 (2018), 96-103.

\bibitem{DO} P. Dembowski and T. G. Ostrom,  Planes of order $n$ with collineation groups of order $n^2$, \textit{Math. Z.} 103 (1968),  239-258.

\bibitem{erdos} P. Erd{\"o}s and P. Tur{\'a}n, On a problem of Sidon in additive number theory, and some related problems,
{\it J. London Math. Soc.} 16 (1941).

\bibitem{gluck}D. Gluck, `A Note on Permutation  Polynomials and Discrete Geometries', {\it Discrete Math.} 80 (1990), 97--100.

\bibitem{hiramine} Y. Hiramine, A conjecture on affine  planes of prime order, {\em J. Comb. Theory Ser. A.} 52 (1989), 44--50.

\bibitem{lidl}
R. Lidl and H. Niederreiter, {\em Finite Fields}, Cambridge University Press, 2000.

\bibitem{ronyai}L. R{\'o}nyai and L. Sz{\"o}nyi, Planar function over finite fields, {\em  Combinatorica} 9 (1989),~315--320

\bibitem{solymosi} J. Solymosi, Incidences and the spectra of graphs. In \textit{Combinatorial Number Theory and Additive Group Theory. Advanced Courses in Mathematics - CRM Barcelona}. Birkhäuser Basel, (2009).

\bibitem{timmons} C. Timmons and J. Verstra{\"e}te, A counterexample to sparse removal, \textit{Eur. J. Comb..} 44 (2015), 77--86.

\end{thebibliography}
\end{document}